\newtheorem{theorem}{Theorem}
\newtheorem{assumption}{Assumption}
\newtheorem{lemma}{Lemma}
\theoremstyle{definition}
\newtheorem{definition}{Definition}
\theoremstyle{remark}
\newtheorem*{remark}{Remark}
\newcommand{\brac}[1]{\left( #1\right)}
\newcommand{\calig}[1]{ {\cal #1} }
\newcommand{\abs}[1]{\left| #1\right|}
\newcommand{\expect}[2][]{\mathbb{E}^{#1}\left[ {#2}\right]}
\newcommand{\prob}[1]{\mathbb{P}\left( #1\right)}
\newcommand{\closedbrac}[1]{\left[ #1\right]}
\newcommand{\norm}[1]{\left\lVert{#1}\right\rVert}
\newcommand{\indicator}[1]{ \mathbbm{1}_{\left\{#1\right\}} }
\newcommand{\MVD}[0]{\mathbb{D}_{\calig{M}}}
\newcommand{\MVDup}[0]{\mathbb{D}_{\calig{M}}^\uparrow}
\newcommand{\Dup}[0]{\mathbb{D}^\uparrow}
\newcommand{\CCup}[0]{\mathbb{C}^\uparrow}
\newcommand{\MVCnoatoms}[0]{\mathbb{C}_{\calig{M}_\sim}}
\newcommand{\MVCupnoatoms}[0]{\mathbb{C}_{\calig{M}_\sim}^\uparrow}
\newcommand{\MVDK}[0]{\mathbb{D}_{\calig{M}}^K}
\newcommand{\MVDupK}[0]{\mathbb{D}_{\calig{M}}^{\uparrow K}}
\newcommand{\MVCupK}[0]{\mathbb{C}_{\calig{M}}^{\uparrow K}}
\newcommand{\DK}[0]{\mathbb{D}_{+}^K}
\newcommand{\DupK}[0]{\mathbb{D}^{\uparrow K}}
\newcommand{\MVCupnoatomsK}[0]{\mathbb{C}_{\calig{M}_\sim}^{\uparrow K}}
\newcommand{\la}{\lambda}
\newcommand{\eps}{\varepsilon}
\newcommand{\ph}{\varphi}
\newcommand{\al}{\alpha}
\newcommand{\sig}{\sigma}
\newcommand{\del}{\delta}
\newcommand{\Om}{\mathnormal{\Omega}}
\newcommand{\N}{{\mathbb N}}
\newcommand{\R}{{\mathbb R}}
\newcommand{\E}{{\mathbb E}}
\newcommand{\PP}{{\mathbb P}}
\newcommand{\calB}{{\cal B}}
\newcommand{\calF}{{\cal F}}
\newcommand{\calL}{{\cal L}}
\newcommand{\calM}{{\cal M}}
\newcommand{\calS}{{\cal S}}
\newcommand{\calX}{{\cal X}}
\newcommand{\skp}{\vspace{\baselineskip}}
\newcommand{\supp}{{\rm supp}}
\newcommand{\lt}{\left}
\newcommand{\rt}{\right}
\newcommand{\To}{\Rightarrow}
\newcommand{\iy}{\infty}
\newcommand{\noi}{\noindent}
\begin{document}

\title{
Fluid limits for earliest-deadline-first networks
}
\author{Rami Atar\thanks{Viterbi Faculty of Electrical Engineering,
Technion -- Israel Institute of Technology, Haifa 32000, Israel}
\and Yonatan Shadmi${}^*$}
\maketitle
\begin{abstract}
This paper analyzes fluid scale asymptotics
of two models of generalized Jackson networks
employing the earliest deadline first (EDF) policy.
One applies the `soft' EDF policy,
where deadlines are used to determine priority but jobs do not renege,
and the other implements `hard' EDF, where jobs renege
when deadlines expire,
and deadlines are postponed with each migration to a new station.
The arrival rates, deadline distribution and service capacity
are allowed to fluctuate over time at the fluid scale.
Earlier work on EDF network fluid limits,
used as a tool to obtain stability of these networks,
addressed only the soft version of the policy,
and moreover did not contain a full fluid limit result.
In this paper, tools that extend the notion
of the measure-valued Skorokhod map are developed
and used to establish for the first time fluid limits
for both the soft and hard EDF network models.

\skp

\noi{\bf AMS subject classifications:} 60K25, 60G57, 68M20

\skp

\noi{\bf Keywords:}
measure-valued Skorokhod map,
measure-valued processes,
fluid limits, earliest deadline first, generalized Jackson networks

\end{abstract}


\section{Introduction}

This work continues a line of research initiated in \cite{atar2018}
and expanded in \cite{atar2018law},
in which a Skorokhod map (SM) is used in conjunction with
stochastic evolution equations in measure space, to characterize scaling limits
of queueing systems.
Specifically, the measure-valued Skorokhod map (MVSM) of \cite{atar2018},
an infinite-dimensional
analogue of various well-known SMs in the finite-dimensional
orthant, is a tool for analyzing policies that prioritize according
to a continuous parameter.
This notion has yielded new results on {\it fluid}
or {\it law of large numbers} (LLN)
scaling limits of queueing systems implementing
the policies {\it earliest deadline first} (EDF), {\it shortest job first} (SJF)
and {\it shortest remaining processing time} (SRPT) \cite{atar2018},
as well as EDF in a many-server scaling \cite{atar2018law}.
In the case of EDF, the state of the system is given by a measure $\xi$ on
$\R$, where $\xi(B)$ expresses the queue length associated with all
jobs in the buffer that have deadlines in the set $B$,
for $B\subset\R$ a Borel set.
The aforementioned continuous parameter corresponds to a job's deadline,
and the MVSM encodes priority by enforcing the rule that,
for every $x\in\R$, work with deadline $>x$ can be transferred
from the buffer to the server
only at times when $\xi((-\iy,x])=0$,
that is, when there is no work in the buffer associated with deadline $\le x$.
Thus the MVSM of \cite{atar2018} is useful for the analysis of a {\it single node}.
The goal of this paper is to address EDF in {\it network setting}.
To this end two multidimensional counterparts of the MVSM are developed
involving multiple measure-valued processes,
one consisting of a map that combines
properties of the MVSM and of the {\it
SM with oblique reflection in the orthant} (SMOR),
and another is where the MVSM serves as a building block
in a system of equations.

EDF is a rational choice of service policy for systems in which job urgency is quantifiable.
Its practical significance stems from its use in real time applications such as
telecommunication networks \cite{Aras1994},
real time operating systems \cite{but11},
and emergency department operations \cite{Hu2017}.
The policy has two main versions:
\begin{itemize}
    \item soft EDF: all jobs are served whether their deadline expires or not;
    deadlines serve only to determine the priority of a job,
    \item hard EDF: deadlines determine priority and
    jobs whose deadlines expire renege the system.
\end{itemize}
The soft and hard versions of this policy are also known in the literature
as EDF without and with reneging, respectively,
and EDF is also referred to as {\it earliest due date first served}.
A further subdivision is according to whether service is preemptive or non-preemptive; in this paper we restrict our attention to the non-preemptive
disciplines.
Although fluid limits of soft EDF queueing networks have been studied
before in relation to the question of their stability in \cite{Bramson2001},
\cite{Kruk2008network},
these papers have established only partial results as far as convergence was concerned
(most significantly, for stability analysis
only convergence along subsequences is required, and indeed
only such convergence was proved; see more details below).
In this paper, the two aforementioned counterparts of the MVSM yield
for the first time fluid limits for both the soft and the hard versions of EDF networks.

There has been great interest in this policy in theoretical studies.
It has been argued
to possess optimality properties, and much effort has been devoted to studying
its scaling limits \cite{Lehoczky1996}, \cite{Doytchinov2001},
\cite{decreusefond2008}, \cite{kruk2008}, \cite{Kruk2010}, \cite{kruk2011},
\cite{Atar2013}, \cite{Atar2014}, mostly in the single node case.
The hard EDF policy was shown to be optimal
with respect to expected number of reneging jobs \cite{Panwar1988},
for the $M/G/1+G$ and $G/D/1+G$ queues.
It was also showed that if there exists an optimal policy for
the $G/G/1+G$ queue then EDF is optimal.
Optimality properties of EDF were further studied
with respect to cumulative reneged work \cite{kruk2011}
as well as steady state probability of loss \cite{Moyal2013}.
Fluid models of multi-class soft EDF Jackson networks
were studied in \cite{Kruk2010}, focusing on invariant states of the dynamical
system and characterizing its invariant manifold.

As for scaling limits,
most effort has been devoted to the heavy traffic regime.
The paper \cite{Lehoczky1996} studied diffusion limits of soft EDF queues under heavy traffic assumptions as Markov processes in Banach spaces.
An expression for the lead time profile given the queue length was obtained.
An important line of research started from \cite{Doytchinov2001},
a paper that pioneered (along with \cite{grom-puha-will})
the use of measure-valued processes for scaling limits
of queueing-related models.
In this paper the diffusion limit of soft EDF queues was characterized.
Further significant development was in \cite{kruk2011},
that used SM techniques to show convergence of a hard EDF queue to a diffusion limit.
Specifically, the total workload process was shown to converge
to a doubly reflected Brownian motion,
and the limit of the underlying state process,
that is again a measure-valued process,
is given in terms of this reflected Brownian motion.
The fact that the total workload in the system is sufficient
to recover the complete state of the system is due to
a certain {\it state space collapse} (SSC) phenomenon,
which remarkably generalizes the well known SSC for priority queues.
SSC, as a phenomenon and as a tool for proving convergence,
is unique to the heavy traffic diffusion regime,
and thus cannot be of help in the fluid limit regime
that is of interest in this paper.

Hard EDF fluid limits were established in
\cite{decreusefond2008} and \cite{Atar2013}.
These papers considered the $M/M/1+G$ and, respectively, $G/G/1+G$ models.
The paper \cite{atar2018} introduced the MVSM and used it to establish
fluid limits for several models, including soft and hard EDF.
The paper \cite{atar2018law} employed this approach to analyze EDF
in a many-server fluid limit regime.

Scaling limits of EDF in network setting were studied in \cite{Lehoczky1997}
and \cite{kruk2004Netwrks} (in heavy traffic) and in
\cite{Bramson2001} and \cite{Kruk2008network} (at fluid scale).
The paper \cite{Lehoczky1997} extended \cite{Lehoczky1996}
to a network setting. Given the queue lengths at the nodes,
the Fourier transform of the lead time distribution
was represented as the solution of a fixed point equation.
The paper \cite{kruk2004Netwrks} extended \cite{Doytchinov2001}
to multi-class acyclic networks.

Much closer to the subject of this paper are the results on fluid scale limits
established in \cite{Bramson2001} and \cite{Kruk2008network}.
It is well known that fluid models are useful
in proving stability of queueing networks, by reducing the problem of
stability of the stochastic dynamical system representing the queueing network
into the stability of all solutions to a deterministic dynamical system,
usually represented in terms of deterministic evolution
equations, comprising the fluid model. This approach, that was
first formulated in general terms in \cite{dai95}, was taken in \cite{Bramson2001}
to establish the stability of soft EDF networks, in fact in the broader, multi-class setting,
provided that they are sub-critically loaded. The approach is based on showing stability
properties of {\it any} solution to the fluid model equations, and does not require
that uniqueness holds for these solutions (for a given initial condition).
As a consequence, uniqueness of solutions to the fluid model equations
is not required, nor is it established in \cite{Bramson2001}.
A related issue is that when using this method it suffices to establish convergence
of the rescaled processes along subsequences.
Indeed, fluid limits are proved only along subsequences, and so the
full convergence to a fluid limit is not established there.

It is also important to point out that the scaling in \cite{Bramson2001}
is such that the gap between  time of arrival and deadline vanishes
in the limit. Consequently, the asymptotics are indistinguishable from
that under a policy that prioritizes by order of arrival, namely {\it first in system first out}
(FISFO).
In contrast, under our scaling the gaps alluded to above
remain of order one, and are therefore captured by the limiting dynamics.
In this we follow the treatment in \cite{atar2018}
and \cite{atar2018law}. This aspect is also
similar to the nature of the limit dynamics
in most of the aforementioned papers on EDF in heavy traffic,
where the limiting system's
state is given by a nondegenerate measure that accounts for a variety
of deadlines.

The stability problem was also studied via the fluid limit approach in \cite{Kruk2008network}
in the more complicated case of preemptive multiclass EDF networks,
under the assumption that customers have fixed routes through the system.
In \cite{Kruk2008network} too it was not claimed, nor is it obvious, that the fluid model equations
uniquely characterize the fluid model solutions for a given initial condition,
and limits were only established along subsequences.
The same paper also studied the stability of hard EDF networks
(as well as several other network models with reneging),
though not via fluid models,
hence it did not address the problem of fluid limits of hard EDF networks.

Our first main result is the fluid limit for soft EDF Jackson networks.
The approach is based on a tool that we develop, that combines properties
of the aforementioned MVSM, which encodes the priority
structure, and SMOR, which encodes the structure of the
flow within the network.
It is well known since \cite{rei84} that the heavy traffic
asymptotics of Jackson networks is given by a reflected Brownian
motion in the orthant with oblique reflection vector field,
a process that can be represented in terms of a corresponding SMOR.
The same SMOR has been used since then to study Jackson networks
in other asymptotic regimes, namely fluid limits \cite[Ch.\ 7]{chen2001},
and large deviations \cite{atar1999large}.
The way we use it in this paper is as follows.
We represent the state of the system
in terms of a {\it vector measure} (an $\R^d$ valued set function)
$\xi$ on $\R$, where for a Borel set $B\subset\R$, $\xi(B)=(\xi^i(B))$
expresses the workload associated with jobs having deadlines
in $B$ in the various buffers $i$.
The resulting extended version of the MVSM, that we call the
{\it vector measure valued SM} (VMVSM)
is a transformation on the space of trajectories with values in
the space of vector measures on $\R$.
It has the property that for each deadline level $x\in\R_+$, the trajectory
$t\mapsto\xi_t([0,x])$ is given as the image of the data of the problem
(a suitable version of the so called netput process)
under a SMOR.
This provides a generalization of the concept from \cite{atar2018}.
The convergence result is valid under very mild probabilistic assumptions.
Note that fluid limits of soft EDF networks
immediately imply fluid limits of FISFO networks,
if the deadlines are set to be the arrival times.


Our second contribution is the treatment of hard EDF networks.
The model is considerably more difficult as far as scaling limits are concerned,
a fact reflected by the very small number of papers on the subject.
In particular, the method of \cite{atar2018} to handle hard EDF
for a single node does not extend to networks (in \cite{atar2018}
uniqueness for the fluid model solutions
is obtained via pathwise minimality, a property that
does not generalize to networks -- counterexamples can be constructed).
In our treatment we make two model assumptions that differ from the soft 
EDF model besides the obvious difference between the two policies.

First, the deadlines of a job are not fixed but increase whenever it
migrates to a new service station.
The problem appears to be considerably
harder with fixed deadlines, having to do with regularity
of the arrivals processes into each station.
That is, in a model with fixed deadlines, when one of the nodes
becomes supercritical, jobs begin to renege at some point,
causing a potentially large number of jobs to arrive at other stations
very close to their deadline. Consequently, small (asymptotically
negligible) perturbations in deadlines may result in large
(fluid scale) differences in the system's state.
We leave open the important question of establishing fluid limits with fixed deadlines.
Under the dynamic deadline assumption we are able to harness the
single node results from \cite{atar2018} to the network setting.
An important part of the proof is devoted to establishing regularity properties of
arrival processes into each station, composed of exogenous and endogenous arrival processes (due to routing within the system), affected
by the reneging in a nontrivial way.
The aforementioned assumption allows us to use an argument by induction over
squares in the time-deadline plane, where at each step the size of the square side is increased.
This is used in treating both the fluid model equations and
the convergence result.

Second, the service time distributions are assumed to
have bounded support.
This eases the notation for a certain technical reason
related to the possibility that, while a job is in service,
the (extended) deadline expires in the station it may be routed to.
This assumption is not difficult to remove
but we keep it because it does simplify the form of the model equations.


Finally we note that the techniques developed in this paper
are likely to be useful
for networks implementing other related disciplines, such as SJF.

The organization of this paper is as follows. At the end of this
section we introduce notation used throughout.
In \S\ref{sec2} we describe the VMVSM and the soft EDF queueing network,
and then state and prove the convergence result.
In \S\ref{sec3} we first describe the fluid model equations
and prove uniqueness of solutions to these equations,
then we formulate the queueing network model under hard EDF,
state the main convergence result and provide its proof.

\subsection*{Notation}

In what follows, $\R_+=[0,\iy)$.
For a Polish space $\calS$,
$\mathbb{C}_\calS$ and $\mathbb{D}_\calig{S}$
denote the space of continuous and, respectively, c\'adl\'ag
functions
from $\R_+$ into $\calS$; if $\calig{S}=\mathbb{R}$ we simply write
$\mathbb{C}$ and $\mathbb{D}$.
Denote by $\mathbb{D}_+$, respectively $\mathbb{D}^{\uparrow}$,
the subset of $\mathbb{D}$ of non-negative functions, respectively of non-decreasing and non-negative functions, and apply a similar notation to $\mathbb{C}$.
We also define 
\begin{align*}
    \mathbb{D}_0^K=\left\{f\in\mathbb{D}^K:\quad f^i\brac{0}\geq 0\text{ for all }1\leq i\leq K\right\}.
\end{align*}
${\cal M}$ denotes the space of finite Borel measures on $\mathbb{R}_+$, endowed with the topology of weak convergence. Its subset of atomless measures is denoted by ${\cal M}_\sim$. It is well known that the topology of weak convergence is metrized by the Levy-Prohorov metric, denoted $d_\calig{L}$. 
Define the subset of $\mathbb{D}_\calig{M}$ of non-decreasing elements as $\mathbb{D}_\calig{M}^\uparrow=\{\zeta\in\mathbb{D}_\calig{M}$: $t\mapsto\int_{\R_+} f(x)\zeta_t(dx)$ is non-decreasing for any continuous, bounded, non-negative function $f\}$. 
With a slight abuse of notation we denote
$\DupK=\brac{\Dup}^K$ and $\MVDupK=\brac{\MVDup}^K$.
The support of a measure $\zeta\in\calM$ is denoted by $\text{supp}\closedbrac{\zeta}$. For $\zeta\in\calM$ we write $\zeta[a,b]$ for $\zeta([a,b])$,
and similarly for $[a,b)$, etc. 
We use the convention that for $a>b$, $\closedbrac{a,b}=\emptyset$.
$\mathbb{D}$ and $\mathbb{D}_{\cal M}$ are equipped with the corresponding $J_1$ Skorokhod topologies, and $\mathbb{D}^K$ and $\mathbb{D}_{\cal M}^K$ are equipped with the product topologies. 
For $x\in\R^K$, $x^i$ denotes the $i$-th component,
and $\norm{x}=\max_{1\leq i\leq K}\abs{x^i}$.
For $x\in\mathbb{D}^K$ denote $\norm{x}_T=\sup_{t\in\closedbrac{0,T}}\norm{x\brac{t}}$.
The modulus of continuity of a function $f:\mathbb{R}_+\to\mathbb{R}$ is denoted by 
\begin{align*}
    w_T\brac{f,\epsilon}=\sup\{\abs{f\brac{s}-f\brac{t}}:s,t\in\closedbrac{0,T},\abs{s-t}\leq\epsilon\}.
\end{align*}

\section{Soft EDF networks}\label{sec2}

In this section we develop the VMVSM, which
combines elements of the MVSM and the SMOR. We then use it
to represent and establish the fluid limit of soft EDF networks.
We begin by introducing,
in \S\ref{sec211}, the MVSM, and then a Skorokhod problem
whose well posedness is stated and proved.
This gives rise to the VMVSM.
Then, in \S\ref{sec:soft prelimit}, the queueing model and its rescaling
are introduced. We then state the main result.
Its proof appears in \S\ref{sec: analysis & proofs}.

\subsection{Model and main results}

\subsubsection{A Skorohod problem in measure space}\label{sec211}

The measure valued Skorokhod problem (MVSP)
introduced in \cite{atar2018} is as follows.
\begin{definition}[MVSP]\label{def:MVSP}
Let $\brac{\alpha,\mu}\in\MVDup\times\Dup$. Then $\brac{\xi,\beta,\iota}\in\MVD\times\MVDup\times\Dup$ is said to solve the MVSP for the data $\brac{\alpha,\mu}$ if, for each $x\in\mathbb{R}_+$,
\begin{enumerate}
    \item $\xi\closedbrac{0,x}=\alpha\closedbrac{0,x}-\mu+\beta\brac{x,\infty}+\iota$,
    \item $\int_{\left[0,\infty\right)}\xi_s\closedbrac{0,x}d\beta_s\brac{x,\infty}=0$,
    \item $\int_{\left[0,\infty\right)}\xi_s\closedbrac{0,x}d\iota\brac{s}=0$,
    \item $\beta\left[0,\infty\right)+\iota=\mu$,
\end{enumerate}
where in 2. and 3. the integration variable is $s$.
\end{definition}
It was shown that there exists a unique solution to the MVSP (\cite[Proposition 2.8]{atar2018}), and thus, the MVSP defines a map
$(\al,\mu)\mapsto(\xi,\beta,\iota)$. Moreover, this map has certain continuity 
properties that were key in establishing scaling limits for both soft and hard EDF
\cite[Theorem 5.4]{atar2018}.
The approach that we take here builds on these ideas but replaces
$\xi$ and $\beta$ by vector measures.
We motivate our definitions by describing a fluid model for an EDF network. 

An informal description is followed by a precise mathematical formulation.
Consider a queueing network with $K$ nodes; each node consists of a server and a queue, that accommodates fluid. Let $P\in\R^{K\times K}$ be a given substochastic
matrix.
Exogenous arrivals form an input to the system.
They consist of fluid entering the various queues. Each unit of arriving mass
has an associated distribution of deadlines.
The servers prioritize the mass with the smallest deadline. 
After service, the mass splits between the nodes according to $P$.
That is, the stream leaving server $i$ splits so that a fraction
$P_{ij}$ routes to queue $j$.
These streams form the endogenous arrival processes.
The remaining fraction, $1-\sum_{j=1}^KP_{ij}$, exits the system.
The deadlines do not vary during the entire sojourn in the system.

Define for the $i$-th queue, respectively, the exogenous arrival process, cumulative potential effort, queue content, and departure process, as follows: 
\begin{alignat}{3}\label{eq:processes definitions}
    &\alpha^i\in\MVDup&&:\quad&&\alpha_t^i\closedbrac{0,x}\text{ is the mass to have exogenously entered the $i$-th queue by time $t$ }\\
    & && &&\text{with deadlines in $\closedbrac{0,x}$},\nonumber\\
    &\mu^i\in\Dup&&:&&\mu^i\brac{t}\text{ is the total mass the $i$-th server can serve by time $t$, if it is never idle},\nonumber\\
    &\xi^i\in\MVD&&:&&\xi_t^i\closedbrac{0,x}\text{ is the mass in the $i$-th queue at time $t$ with deadlines in $\closedbrac{0,x}$},\nonumber\\
    &\beta^{i}\in\MVDup&&:&&\beta_t^{i}\closedbrac{0,x}\text{ is the mass to have left the $i$-th queue by time $t$ with deadlines in $\closedbrac{0,x}$}.\nonumber
\end{alignat}
The initial condition of the buffer content is already contained in $\alpha$,
namely it is given by $\al_0$. 
Let the idleness (or lost effort)
process be defined by $\iota^i=\mu^i-\beta^i\left[0,\infty\right)$.
The following balance equation holds:
\begin{align}
    \xi_t^i\closedbrac{0,x}&=\alpha_t^i\closedbrac{0,x}+\sum_{j=1}^KP_{ji}\beta_t^j\closedbrac{0,x}-\beta_t^i\closedbrac{0,x}.
\end{align}
The work conservation property and the priority structure of EDF
are expressed through 
\begin{align}
    \int&\xi_t^i\closedbrac{0,x}d\iota^i\brac{t}=0\nonumber,\quad 1\leq i\leq K,\\
    \int&\xi_t^i\closedbrac{0,x}d\beta_t^i\brac{x,\infty}=0,\quad1\leq i\leq K.
\end{align}
One can recognize
similarities to the MVSP, although the objects in our equations are vector-valued
parallelrs
of those from the MVSP, and moreover, the equations are coupled.
This motivates us to define a multi-dimensional version of the MVSP.
\begin{definition}[VMVSP]
Let $P\in\R^{K\times K}$ be a substochastic matrix and let $R=I-P^T$. Let
$\brac{\alpha,\mu}\in\MVDupK\times\DupK$. Then $\brac{\xi,\beta,\iota}\in\MVDK\times\MVDupK\times\DupK$ is said to solve the VMVSP associated with the matrix $P$ for the data $\brac{\alpha,\mu}$ if, for each $x\in\mathbb{R}_+$,
\begin{enumerate}
    \item $\xi\closedbrac{0,x}=\alpha\closedbrac{0,x}-R\beta\closedbrac{0,x}$,
    \item $\int_{\left[0,\infty\right)}\xi_s^i\closedbrac{0,x}d\beta_s^i\brac{x,\infty}=0$ for $i\in\{1,...,K\}$,
    \item $\int_{\left[0,\infty\right)}\xi_s^i\closedbrac{0,x}d\iota^i\brac{s}=0$  for $i\in\{1,...,K\}$,
    \item $\beta\left[0,\infty\right)+\iota=\mu$.
\end{enumerate}
\end{definition}

Note that the motivating fluid model indeed requires that $\xi_t$
be a nonnegative vector measure for each $t$, and that
$t\mapsto\beta_t\brac{x,\infty}+\iota\brac{t}$ be non-decreasing and non-negative. 
These properties are assured by the above notion on VMVSM.

A square matrix is called an $M$-matrix if it has positive diagonal elements, non-positive off-diagonal elements, and a non-negative inverse.
By \cite[Lemma 7.1]{chen2001}, for a non-negative matrix $G$ whose spectral radius is strictly less than 1, also called convergent, $I-G$ is an $M$-matrix. 

\begin{theorem}\label{thm:MD-MVSM}
Let $P\in\R^{K\times K}$ be a convergent substochastic matrix, and $\brac{\alpha,\mu}\in\MVDupK\times\DupK$ be such that $\mu\brac{0}=0$. Then the VMVSP associated with the matrix $P$ and the data $\brac{\alpha,\mu}$ has a unique solution.
\end{theorem}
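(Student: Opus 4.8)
The idea is to \emph{freeze the deadline level}: for each $x\in\R_+$ the $\R^{K}$-valued trajectory $t\mapsto\xi_t\closedbrac{0,x}$ must be the image, under the Skorokhod map in the orthant with oblique reflection matrix $R=I-P^{T}$, of the netput $X_x:=\alpha_\cdot\closedbrac{0,x}-R\mu$; one then recovers $\xi,\beta,\iota$ by letting $x$ range and checking consistency across levels. Since $P$ is convergent, $R$ is an $M$-matrix by \cite[Lemma 7.1]{chen2001}, so by the Harrison--Reiman construction (see \cite[Ch.~7]{chen2001}) the Skorokhod map $\Phi_R$ and the regulator map $\Psi_R$ are well defined and Lipschitz on $\mathbb{D}_0^{K}$, the pair $(Z,Y)=(\Phi_R(\eta),\Psi_R(\eta))$ solving $Z=\eta+RY$, $Z\ge0$, $Y\in\DupK$, $\int Z^{i}\,dY^{i}=0$ $(1\le i\le K)$ is unique, and $\Psi_R(\eta)$ is the smallest element of $\{Y\in\DupK:\eta+RY\ge0\}$. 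From this minimality I use that $\Psi_R$ is antitone and that $\Phi_R$ is monotone under addition of a non-decreasing perturbation to the free process (the latter following from the one-dimensional case via the monotone Picard approximation).

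\textbf{Existence.} Fix $x$. One checks $X_x\in\mathbb{D}_0^{K}$: $t\mapsto\alpha^{i}_t\closedbrac{0,x}$ is non-decreasing (an infimum of the non-decreasing càdlàg maps $t\mapsto\int f\,d\alpha^{i}_t$ over continuous $f$ dominating the indicator of $\closedbrac{0,x}$) and right-continuous (Portmanteau for the closed set $\closedbrac{0,x}$, together with this monotonicity), and $X^{i}_x(0)=\alpha^{i}_0\closedbrac{0,x}\ge0$ since $\mu(0)=0$. Put $\zeta_x:=\Phi_R(X_x)$, $Y_x:=\Psi_R(X_x)$ and define candidates through their distribution functions in $x$,
\begin{align*}
 \xi^{i}_t\closedbrac{0,x}:=\zeta^{i}_x(t),\qquad \beta^{i}_t\closedbrac{0,x}:=\mu^{i}(t)-Y^{i}_x(t),\qquad \iota:=\mu-\beta_\cdot\closedbrac{0,\infty}.
\end{align*}
The steps are then: (i) $0\le Y_x\le\mu$ componentwise --- the lower bound is nonnegativity of a regulator, the upper bound holds because $\mu$ is an admissible regulator for $X_x$ (as $X_x+R\mu=\alpha_\cdot\closedbrac{0,x}\ge0$), whence $Y_x\le\mu$ by minimality --- so $\beta^{i}_t\closedbrac{0,x}\in[0,\mu^{i}(t)]$; (ii) $x\mapsto Y^{i}_x(t)$ is non-increasing (antitonicity, since $X_x\le X_{x'}$ for $x\le x'$) and $x\mapsto\zeta^{i}_x(t)$ is non-decreasing (monotonicity of $\Phi_R$ under the non-decreasing perturbation $X_{x'}-X_x=\alpha_\cdot(x,x']$), both right-continuous in $x$ with finite limits as $x\to\infty$, so $\xi_t,\beta_t$ are genuine finite nonnegative vector measures; (iii) each $\zeta_x,Y_x$ is càdlàg in $t$, $X_x\to X_\infty:=\alpha_\cdot(\R_+)-R\mu$ uniformly on compacts, and $\{\xi_t:t\le T\}$ is uniformly tight via $\beta_t(x,\infty)\le R^{-1}\alpha_t(x,\infty)$, which with the Lévy continuity theorem gives $\xi,\beta\in\MVDK$; (iv) $\beta\in\MVDupK$ and $\iota=\Psi_R(X_\infty)\in\DupK$ with $\iota(0)=0$ --- for the first, restarting the Skorokhod problem at time $s$ one sees $u\mapsto\mu(u)-\mu(s)$ is an admissible regulator for the restarted free process $\zeta_x(s)+X_x(\cdot)-X_x(s)$, so $Y_x(\cdot)-Y_x(s)\le\mu(\cdot)-\mu(s)$, i.e.\ $t\mapsto\beta^{i}_t\closedbrac{0,x}$ is non-decreasing (and the analogue on intervals uses the $t$-monotonicity of $Y_x-Y_{x'}$). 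Finally the VMVSP identities: (1) because $\zeta_x=X_x+RY_x=\alpha_\cdot\closedbrac{0,x}-R(\mu-Y_x)$; (4) by the definition of $\iota$; and, since $Y^{i}_x=\mu^{i}-\beta^{i}_\cdot\closedbrac{0,x}=\beta^{i}_\cdot(x,\infty)+\iota^{i}$ with both increments nonnegative and $\zeta^{i}_x\ge0$, the identity $\int\zeta^{i}_x\,dY^{i}_x=0$ splits into (2) and (3).

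\textbf{Uniqueness.} Let $(\xi,\beta,\iota)$ solve the VMVSP and fix $x$. By identity 4, $Y_x:=\mu-\beta_\cdot\closedbrac{0,x}=\beta_\cdot(x,\infty)+\iota\in\DupK$, and identity 4 at $t=0$ (with $\mu(0)=0$) forces $\beta_0=0$, $\iota(0)=0$, so $Y_x(0)=0$. Identity 1 gives $\xi_\cdot\closedbrac{0,x}=\alpha_\cdot\closedbrac{0,x}-R\beta_\cdot\closedbrac{0,x}=X_x+RY_x\ge0$ and identities 2--3 give $\int\xi^{i}_s\closedbrac{0,x}\,dY^{i}_x(s)=0$, so $(\xi_\cdot\closedbrac{0,x},Y_x)$ solves the oblique Skorokhod problem for $X_x$. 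By uniqueness of $(\Phi_R,\Psi_R)$, $\xi_\cdot\closedbrac{0,x}=\Phi_R(X_x)$ and $Y_x=\Psi_R(X_x)$ for every $x$, which determines $\xi$ (a finite measure is fixed by its distribution function), then $\beta$ via $\beta_\cdot\closedbrac{0,x}=\mu-Y_x$, then $\iota=\mu-\beta_\cdot\closedbrac{0,\infty}$.

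\textbf{Main obstacle.} The work is in the existence half: gluing the per-level Skorokhod solutions into honest measure-valued càdlàg processes. The load-bearing facts are the monotonicity of $\Phi_R$ under non-decreasing perturbations of the free process --- which is what forces $x\mapsto\xi_t\closedbrac{0,x}$ to be non-decreasing, i.e.\ $\xi_t$ to be a nonnegative vector measure, and is the genuinely multidimensional input --- and the joint $(t,x)$-regularity (uniform tightness and càdlàg-ness) needed to place $\xi,\beta$ in $\MVDK$. Once the level-by-level reduction to the classical oblique Skorokhod problem is set up, both halves, and uniqueness in particular, are routine. (An equivalent construction obtains $\beta$ as a fixed point of the map sending $\beta$ to the vector of single-node MVSM outputs with arrivals $\alpha^{i}+\sum_{j}P_{ji}\beta^{j}$, using \cite[Proposition 2.8]{atar2018}; its crux is then a contraction estimate matching the single-node MVSM's Lipschitz constant against the convergence of $P$, in a weighted norm that makes $P^{T}$ a strict contraction since $I-P^{T}$ is an $M$-matrix.)
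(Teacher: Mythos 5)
Your proposal is correct and follows essentially the same route as the paper: freeze the level $x$, apply the oblique-reflection Skorokhod map to $\alpha_\cdot\closedbrac{0,x}-R\mu$ (uniqueness is then immediate from the ORMT), and glue the levels into measure-valued paths using monotonicity of the reflection map under non-decreasing perturbations of the free process. The comparison facts you derive from regulator minimality and the Picard iteration are precisely the content of the Ramasubramanian monotonicity theorem that the paper cites for this gluing step.
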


\begin{proof}
The uniqueness argument is based on SM theory for oblique reflection
in the orthant.
As for existence, our strategy is to construct a candidate
and show that it is a solution,
relying in a crucial way on a monotonicity result due to Ramasubramanian.

We first present the existence argument.
To construct a candidate, first rewrite the first condition of the VMVSP as
\begin{align}\label{eq:soft fluid dynamics vector form}
    \xi_t\closedbrac{0,x}=\alpha_t\closedbrac{0,x}-R\mu\brac{t}+R\brac{\beta_t\brac{x,\infty}+\iota\brac{t}}.
\end{align}
The {\it oblique reflection mapping theorem} (ORMT),
\cite{harrison1981},
\cite[Theorem 7.2]{chen2001}
states that for an M-matrix $R$, for every $u\in\mathbb{D}_0^K$ there exists a unique pair $\brac{z,y}\in\mathbb{D}_+^K\times\DupK$ satisfying
\begin{align}
    \label{eq:ORMT1}&z=u+Ry\\
    \label{eq:ORMT2}&\int_{[0,\iy)} z^i\brac{s}dy^i\brac{s}=0,\quad 1\leq i\leq K.
\end{align}
The solution map, denoted $\Gamma:\mathbb{D}_0^K\to\DK\times\DupK$,
is thus uniquely defined by the relation: $\brac{z,y}=\Gamma\brac{u}=\brac{\Gamma_1\brac{u},\Gamma_2\brac{u}}$ iff \eqref{eq:ORMT1}--\eqref{eq:ORMT2} hold. 
It is well known that the maps $\Gamma_1$ and $\Gamma_2$ are Lipschitz from $\mathbb{D}_0^K$ to $\mathbb{D}_+^{\uparrow K}$ and $\mathbb{D}_+^K$ in the sense that for any $T>0$
\begin{align}\label{eq:Lipschitz of Gamma}
\norm{\Gamma_i\brac{u_1}-\Gamma_i\brac{u_2}}_T\leq L\norm{u_1-u_2}_T,\quad i=1,2,
\end{align}
where the constant $L$ depends only on $R$.

The matrix $P^T$ is non-negative and convergent by assumption, hence $R$ is an $M$-matrix. 
Moreover, $\alpha_0\closedbrac{0,x}-R\mu\brac{0}=\alpha_0\closedbrac{0,x}\geq 0$. 
Also ,if $\brac{\xi,\beta,\iota}$ is a solution, then $\brac{\xi\closedbrac{0,x},\beta\brac{x,\infty}+\iota}\in\DK\times\DupK$.
Finally, Equation \eqref{eq:soft fluid dynamics vector form} has the form of \eqref{eq:ORMT1}, and conditions 2 and 3 of the VMVSP correspond to \eqref{eq:ORMT2}.
Therefore, the conditions of the ORMT are satisfied, so we can construct a candidate as follows. Define
\begin{align}\label{eq:xi and beta candidate}
    &\mathring{\xi}\brac{x}=\Gamma_1\brac{\alpha\closedbrac{0,x}-R\mu}\in\DK, &\mathring{\beta}\brac{x}+\iota=\Gamma_2\brac{\alpha\closedbrac{0,x}-R\mu}\in\DupK.
\end{align}
The path $\iota$ can be recovered by taking $x$ to infinity, yielding $\iota=\Gamma_2\brac{\alpha\left[0,\infty\right)-R\mu}$. 
One can then find $\mathring{\beta}_t\brac{x}$ by $\mathring{\beta}\brac{x}=\Gamma_2\brac{\alpha\closedbrac{0,x}-R\mu}-\iota=\Gamma_2\brac{\alpha\closedbrac{0,x}-R\brac{\mu-\iota}}$.

It is now argued that these functions define vector measure valued paths
via the relations
$(\mathring{\xi}_t\brac{x},\mathring{\beta}_t\brac{x})=
(\xi_t\closedbrac{0,x},\beta_t\brac{x,\infty})$.
We must show that $\mathring{\xi}_t\brac{x}$ is right-continuous
and non-decreasing in $x$, $\mathring{\beta}\brac{x}-\beta\brac{y}$ is in $\DupK$ for $x<y$
and that $\mathring{\beta}\brac{x}$ right-continuous in $x$. 
The right-continuity follows by the Lipschitz property \eqref{eq:Lipschitz of Gamma} and Lemma 2.4 in \cite{atar2018}.
Next, Theorem 4.1 of \cite{Ramasubramanian2000} states that
the map $\Gamma$ is monotone in the following sense. 
Let $u_1,u_2\in\mathbb{D}_0^K$ such that $u_2-u_1\in\DupK$, and let $\brac{z_i,y_i}=\Gamma\brac{u_i},i=1,2$. 
Then $z_2-z_1\in\mathbb{D}_+^K$ and $y_1-y_2\in\DupK$. 
Using this theorem in the setting of the VMVSP,
let $x<y$ and denote $u_1=\alpha\closedbrac{0,x}-R\mu$,
and $u_2=\alpha\closedbrac{0,y}-R\mu$. Then
$u_2-u_1=\alpha\left(x,y\right]\in\DupK$.
Therefore $\mathring{\xi}\brac{y}-\mathring{\xi}\brac{x}\in\mathbb{D}_+^K$ and $\mathring{\beta}\brac{x}-\mathring{\beta}\brac{y}\in\DupK$. 
Thus
\begin{align}\label{eq:xi and beta}
    &\xi\closedbrac{0,x}=\mathring{\xi}\brac{x}=\Gamma_1\brac{\alpha\closedbrac{0,x}-R\mu},
    &\beta\brac{x,\infty}=\mathring{\beta}\brac{x}=\Gamma_2\brac{\alpha\closedbrac{0,x}-R\mu}-\iota,
\end{align}
define two vector measure valued paths in $\MVDK$ and $\MVDupK$ respectively.
This demonstrates existence.

As for uniqueness, it follows from the ORMT that any solution
to the VMVSP must satisfy
\[
(\xi\closedbrac{0,x},\beta(x,\iy)+\iota)=\Gamma\brac{\alpha\closedbrac{0,x}-R\mu},
\]
a relation that defines uniquely $\xi,\beta$ and $\iota$.
\end{proof}

Theorem \ref{thm:MD-MVSM} defines a map from $\MVDupK\times\DupK\to\MVDK\times\MVDupK\times\DupK$,
namely the solution map of the VMVSM. We denote it by $\Theta$. The
dependence on $P$ is not indicated explicitly.
This notion is an extension of the MVSM from \cite{atar2018}.

\subsubsection{Queueing model and scaling} \label{sec:soft prelimit}

The queueing model is defined analogously to the fluid model.
It is indexed by $N\in\mathbb{N}$, and defined on
a probability space $(\Om,\calF,\PP)$.
It consists of $K\ge1$ service stations, each containing a buffer with infinite room and a server. 
The servers prioritize according to the EDF policy, and according to arrival times in case of a tie.
Throughout this paper, for any parameter, random variable or process
associated with the $N$-th system, say $a^N$,
we use the notation $\bar{a}^N=N^{-1}a^N$ for normalization.
A substochastic matrix $P\in\R^{K\times K}$, referred to as the routing matrix, is given,
where the entry $P_{ij}$ represents the probability that a job that completes
service at the $i$-th server is routed to the $j$-th queue.
Denote $R=I-P^T$.
For each $N$, processes denoted by $\alpha^N=(\al^{i,N})$, $\mu^N=(\mu^{i,N})$,
$\xi^N=(\xi^{i,N})$, and $\beta^N=(\beta^{i,N})$ are associated with the $N$-th system,
which represent discrete versions of their fluid model counterparts.
Specifically, for a Borel set $B\subset\R_+$, $\al^{i,N}_t(B)$
denotes the number of external arrivals into queue $i$ up to time $t$
with deadline in $B$, $\xi^{i,N}_t(B)$ denotes the number of jobs in queue $i$ at time $t$ with deadline in $B$, and $\beta_t^{i,N}\brac{B}$
denotes the number of jobs with deadline in $B$ transferred by time $t$ from the $i$-th queue
to the corresponding server.
Indeed, in the queueing model, the job being served is not counted in the queue,
and therefore there is a distinction between the number of jobs transferred from queue $i$
and the number of jobs transferred from server $i$. Thus in addition to $\beta^{i,N}$ we introduce
a process $\gamma^N$. Namely, $\gamma^{ij,N}_t(B)$ denotes the number of jobs with deadline in $B$
that were transferred from server $i$ to queue $j$ by time $t$, where $j=0$ corresponds to jobs
leaving the system.
For each $i$, $\gamma^{i,N}_t(B):=\sum_{j=0}^K\gamma^{ij,N}_t(B)$ gives the total number of jobs with deadline in $B$ that departed server $i$ by time $t$.
The process $\mu^{i,N}\brac{t}$ represents the cumulative service capacity of server $i$ by time $t$.
Let the busyness at time $t$, $B_t^{i,N}\brac{B}$, be defined as the indicator
of the event that at this time, a job with deadline in $B$ occupies the $i$-th server, and let $B_{0-}^{i,N}\brac{B}$ be its initial condition.
Denote the total number of departures from server $i$ by
$D^{i,N}\brac{t}:=\gamma^{i,N}_t\left[0,\infty\right)$. 
Denote by $\xi_{0-}^{i,N}\brac{B}$ the number of jobs present in the $i$-th queue just prior to time $t=0$ with deadlines in $B$.

Next, define the cumulative effort and cumulative lost effort processes, respectively, as
\begin{align}
    &T^{i,N}\brac{t}=\int_{[0,t]}B_s^{i,N}\lt[0,\infty\rt)d\mu^{i,N}\brac{s},\\
    &\iota^{i,N}\brac{t}=\mu^{i,N}\brac{t}-T^{i,N}\brac{t}.
\end{align}
To model the stochasticity of the service times
a counting process $S^i\brac{t}$ associated with each server $i$ is introduced, assumed to be
a renewal process for which the interarrival times have unit mean,
with the convention $S^i\brac{0}=1$. 
Then the departure process is given by
\begin{align}\label{eq:soft prelimit departures}
    D^{i,N}\brac{t}=\gamma^{i,N}_t\left[0,\infty\right)=S^i\brac{T^{i,N}\brac{t}}-1.
\end{align}

The various relations between these processes are described in what follows.
The relation between $\gamma^N$, $\beta^N$ and $B^N$ is given by
\begin{align}\label{eq:soft fluid beta gamma}
    B_{0-}^{i,N}\closedbrac{0,x}+\beta_t^{i,N}\closedbrac{0,x}=\gamma_t^{i,N}\closedbrac{0,x}+B_t^{i,N}\closedbrac{0,x}.
\end{align}
The balance equation for the queue content is
\begin{align}\label{eq:soft prelimit dynamics}
    \xi_t^{i,N}\closedbrac{0,x}&=\alpha_t^{i,N}\closedbrac{0,x}+\sum_{j=1}^K\gamma_t^{ji,N}\closedbrac{0,x}-\beta_t^{i,N}\closedbrac{0,x}.
\end{align}
In addition, the work conservation condition and EDF policy are expressed through
\begin{alignat}{2}\label{eq:prelimit soft work conservation conditions}
    \int_{[0,\iy)}&\xi_t^{i,N}\closedbrac{0,x}d\iota^{i,N}\brac{t}=0,\quad&&1\leq i\leq K,\\
    \label{eq:prelimit soft EDF conditions}\int_{[0,\iy)}&\xi_t^{i,N}\closedbrac{0,x}d\beta_t^{i,N}\brac{x,\infty}=0,\quad&&1\leq i\leq K.
\end{alignat}

For the routing process of server $i$, consider i.i.d.\ $\{0,1,\ldots,K\}$-valued
random variables $\pi^{i,N}\brac{n}$ with distribution given by
\begin{align*}
    \prob{\pi^{i,N}\brac{n}=j}=
    \begin{cases}
        1-\sum_{k=1}^KP_{ik} & \text{if }j=0, \\
        P_{ij} & \text{if } j\in\{1,...,K\}.
    \end{cases}
\end{align*}
The stochastic primitives
$\{\pi^{i,N}\brac{n}\}$, $\{S\}$ and $\{\alpha^N\}$ are assumed to be mutually independent.
Moreover, $\pi^{i,N}\brac{n}$ are assumed to be mutually independent across $i$, and so are
$S^{i}$ and $\al^{i,N}$.

Define for $j\in\{1,...,K\}:$  $\theta^{ij,N}\brac{n}=\indicator{\pi^{i,N}\brac{n}=j}$.
The $n$-th job to depart server $i$ is routed to server $\pi^{i,N}\brac{n}$,
or, if this random variable is zero, leaves the system.
Thus, the $n$-th job is routed to server $j$ if and only if $\theta^{ij,N}\brac{n}=1$.
Let the jump times of $D^{i,N}$ be denoted by
\begin{align}
    \tau_n^{i,N}=\inf\{t\geq 0:D^{i,N}\brac{t}\geq n\},
\end{align}
and let
\begin{align}
    \hat{\theta}^{ij,N}\brac{t}=\sum_{n=1}^\infty\mathbbm{1}_{\lt[\tau_{n}^{i,N},\tau_{n+1}^{i,N}\rt)}\brac{t}\theta^{ij,N}\brac{n}.
\end{align}
Then $\gamma^{ij,N}$ can be obtained from $\gamma^{i,N}$ by 
\begin{align}\label{104}
    \gamma_t^{ij,N}\closedbrac{0,x}=\int_{\lt[0,t\rt]}\hat{\theta}^{ij,N}\brac{s}d\gamma_s^{i,N}\closedbrac{0,x}.
\end{align}
This completes the definition of the model.

Our main result concerning soft EDF networks is the following.
\begin{theorem}\label{thm: soft convergence}
Assume that the routing matrix $P$ is a convergent substochastic matrix,
and let
$\brac{\alpha,\mu}\in\MVCupK\times\mathbb{C}^{\uparrow K}$ be such that $\mu\brac{0}=0$.
Assume, moreover, that $\al_t\in\calM_{\sim}$ for all $t$,
and that there exists a constant $C$
such that $\expect{\sum_{i=1}^K\alpha_t^{i,N}\lt[0,\infty\rt)}\leq CNt$.
Finally, assume $(\bar\al^N,\bar\mu^N)\To(\al,\mu)$.
Then $\brac{\bar{\xi}^N,\bar{\beta}^N,\bar{\iota}^N}\Rightarrow\brac{\xi,\beta,\iota}$,
where $\brac{\xi,\beta,\iota}$ is the unique solution of the VMVSP with primitives $\brac{\alpha,\mu}$.
\end{theorem}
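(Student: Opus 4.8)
The plan is to reduce, via the Skorokhod representation theorem, to an almost sure statement, and then, for each deadline level $x$, to recognise the rescaled pair $\brac{\bar\xi^N_\cdot[0,x],\,\bar\beta^N_\cdot(x,\iy)+\bar\iota^N}$ as the image under the oblique reflection map $\Gamma$ of a datum that converges, as $N\to\iy$, to $\al_\cdot[0,x]-R\mu$; Theorem~\ref{thm:MD-MVSM} then identifies the limit with $\Theta(\al,\mu)=(\xi,\beta,\iota)$. \emph{Reduction.} The stochastic primitives $\{\al^N,\mu^N\}$, $\{S^i\}$ and $\{\pi^{i,N}\}$ are mutually independent, and the functional strong law gives $N^{-1}S^i(N\,\cdot\,)\To\mathrm{id}$ and $N^{-1}\sum_{k\le N\,\cdot\,}\theta^{ij,N}(k)\To P_{ij}\,\mathrm{id}$; hence, jointly with the hypothesis $(\bar\al^N,\bar\mu^N)\To(\al,\mu)$, the whole collection converges in distribution, so by the Skorokhod representation theorem we may assume all these convergences are almost sure. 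Since the queueing processes are measurable functionals of the primitives, it then suffices to prove $(\bar\xi^N,\bar\beta^N,\bar\iota^N)\to(\xi,\beta,\iota)$ almost surely. On the almost sure event, the convergent-matrix assumption together with the flow equations \eqref{eq:soft prelimit dynamics}, \eqref{eq:soft prelimit departures}, \eqref{104} and the bound $\expect{\sum_i\al^{i,N}_t[0,\iy)}\le CNt$ furnish, for every $T$, bounds uniform in $N$ on $\bar\mu^{i,N}(T)$, $\bar\al^{i,N}_T[0,\iy)$, $\bar\xi^{i,N}_T[0,\iy)$, $\bar\beta^{i,N}_T[0,\iy)$, $\bar D^{i,N}(T)$ (here one solves a recursion for the total number of service completions, using $R^{-1}\ge0$) and $\bar\iota^{i,N}(T)$, together with uniform control of the tails of the deadline measures.

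Next I would cast the rescaled balance equation in (perturbed) ORMT form. From \eqref{eq:soft fluid beta gamma} and $0\le B^{i,N}_t[0,x]\le1$ one gets $\bar\gamma^{i,N}_t[0,x]=\bar\beta^{i,N}_t[0,x]+O(N^{-1})$; from the identity $\gamma^{ij,N}_t[0,x]-P_{ij}\gamma^{i,N}_t[0,x]=\sum_{n\le D^{i,N}(t)}\brac{\theta^{ij,N}(n)-P_{ij}}\indicator{d^{i,N}_n\le x}$, where $d^{i,N}_n$ is the deadline of the $n$-th job to leave server $i$, one observes that $\indicator{d^{i,N}_n\le x}$ is measurable with respect to the history preceding that departure while $\theta^{ij,N}(n)-P_{ij}$ is bounded, centred and independent of it, so the right-hand side is a martingale in $n$ with bounded increments; Doob's inequality and $D^{i,N}(T)=O(N)$ then give $\norm{\bar\gamma^{ij,N}_\cdot[0,x]-P_{ij}\bar\gamma^{i,N}_\cdot[0,x]}_T\to0$ for each $x$, which I expect to promote to hold uniformly in $x$ via the monotonicity in $x$ of $\gamma^{ij,N}_\cdot[0,x]$ and $\gamma^{i,N}_\cdot[0,x]$ (or else to use only at fixed $x$ and complement by a tightness argument in $\MVDK$). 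Finally the renewal functional strong law gives $\bar D^{i,N}=N^{-1}S^i(N\bar T^{i,N})-N^{-1}=\bar T^{i,N}+o(1)=\bar\mu^{i,N}-\bar\iota^{i,N}+o(1)$ uniformly on $[0,T]$, using $0\le\bar T^{i,N}\le\bar\mu^{i,N}$. Substituting into \eqref{eq:soft prelimit dynamics} and rearranging exactly as in the passage to \eqref{eq:soft fluid dynamics vector form} yields
\begin{equation}\label{eq:sketch}
\bar\xi^N_t[0,x]=\bar\al^N_t[0,x]-R\bar\mu^N(t)+R\brac{\bar\beta^N_t(x,\iy)+\bar\iota^N(t)}+e^N_t(x),\qquad\norm{e^N_\cdot(x)}_T\to0,
\end{equation}
where $\bar\xi^N_t[0,x]\ge0$, $t\mapsto\bar\beta^N_t(x,\iy)+\bar\iota^N(t)$ is non-decreasing and non-negative, and the complementarity $\int\bar\xi^{i,N}_t[0,x]\,d\brac{\bar\beta^{i,N}_t(x,\iy)+\bar\iota^{i,N}_t}=0$ holds exactly by \eqref{eq:prelimit soft work conservation conditions}--\eqref{eq:prelimit soft EDF conditions}. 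Hence, up to an $O(N^{-1})$ adjustment at $t=0$ restoring admissibility of the free term, $\brac{\bar\xi^N_\cdot[0,x],\,\bar\beta^N_\cdot(x,\iy)+\bar\iota^N}=\Gamma\brac{\bar\al^N_\cdot[0,x]-R\bar\mu^N+e^N_\cdot(x)}$.

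It remains to pass to the limit. Since $\al_t\in\calM_\sim$ for every $t$, the map $x\mapsto\al_t[0,x]$ is continuous, so $\bar\al^N\to\al$ in $\MVDK$ implies $\bar\al^N_\cdot[0,x]\to\al_\cdot[0,x]$ in $\mathbb{D}^K$ for every $x$, uniformly over compact ranges of $x$ (by the standard correspondence between weak convergence in $\MVDK$ and convergence of the distribution functions at continuity levels, cf.\ \cite{atar2018}), while $\bar\mu^N\to\mu$ uniformly on compacts. Applying the Lipschitz bound \eqref{eq:Lipschitz of Gamma} to \eqref{eq:sketch} gives, for each $x$, $\brac{\bar\xi^N_\cdot[0,x],\,\bar\beta^N_\cdot(x,\iy)+\bar\iota^N}\to\Gamma\brac{\al_\cdot[0,x]-R\mu}=\brac{\xi_\cdot[0,x],\,\beta_\cdot(x,\iy)+\iota}$ uniformly on compacts, with $(\xi,\beta,\iota)=\Theta(\al,\mu)$ the unique solution of the VMVSP. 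Letting $x\to\iy$ (using the tail bound of the reduction step and $\beta_t(x,\iy)\to0$) extracts $\bar\iota^N\to\iota$, whence $\bar\beta^N_\cdot(x,\iy)\to\beta_\cdot(x,\iy)$ and, since $\bar\beta^N_\cdot[0,\iy)=\bar D^N+O(N^{-1})\to\mu-\iota$, also $\bar\beta^N_\cdot[0,x]\to\beta_\cdot[0,x]$ for every $x$. Because $\Gamma_1,\Gamma_2$ are Lipschitz and $x\mapsto\al_t[0,x]$ is continuous, the limits satisfy $\xi_t,\beta_t\in\calM_\sim$, so these levelwise convergences upgrade to $\bar\xi^N\to\xi$ and $\bar\beta^N\to\beta$ in $\MVDK$ (the same correspondence in the reverse direction, the requisite tightness of the measures coming from the mass and tail bounds); together with $\bar\iota^N\to\iota$ this yields the claimed almost sure convergence, hence the theorem.

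I expect the main obstacle to be the interplay between the $J_1$ topology on $\MVDK$ and the level processes $t\mapsto\zeta_t[0,x]$: the oblique reflection map is available only at each fixed deadline level, so one must (i) bound the perturbation $e^N$ --- above all the routing fluctuation --- uniformly in $x$, and (ii) rebuild $\MVDK$-convergence from the levelwise limits. Both rest on the atomlessness hypotheses and on quantitative tail estimates for the deadline measures, with the martingale structure of the routing error and the Lipschitz continuity of $\Gamma$ carrying out the quantitative part; the convergence of $P$ and the moment hypothesis enter through the a priori bounds of the reduction step, in particular through the recursion that keeps $D^{i,N}$ of order $N$.
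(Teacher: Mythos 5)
Your proposal is correct and follows essentially the same route as the paper: the rescaled pair $\brac{\bar\xi^N_\cdot[0,x],\bar\beta^N_\cdot(x,\infty)+\bar\iota^N}$ is identified as the image under the oblique reflection map of a perturbed datum, the routing fluctuation is controlled by the same martingale argument (the paper uses Burkholder where you use Doob), the service error by the renewal LLN, and the Lipschitz continuity of $\Gamma$ together with atomlessness of $\al_t$ upgrades the levelwise limits to convergence in $\MVDK$. The only cosmetic differences are your use of the Skorokhod representation to work almost surely (the paper argues directly in probability) and your explicit remark on promoting the routing-error bound to be uniform in $x$ via monotonicity, a step the paper leaves implicit.
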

The proof is given in the next section.

\subsection{Proof}\label{sec: analysis & proofs}

Recall the ORMT. To use this theorem, we wish to bring equations \eqref{eq:soft prelimit dynamics}, \eqref{eq:prelimit soft work conservation conditions} and \eqref{eq:prelimit soft EDF conditions} to a form compatible with the conditions \eqref{eq:ORMT1} and \eqref{eq:ORMT2}. To this end,
first define the error processes
\begin{align}\label{eq:errors}
    &e^{i,N}\brac{t}=\beta_t^{i,N}\left[0,\infty\right)+\iota^{i,N}\brac{t}-\mu^{i,N}\brac{t},\quad 1\leq i\leq K,\nonumber\\
    &E^{ij,N}\brac{t,x}=\gamma_t^{ij,N}\closedbrac{0,x}-P_{ij}\gamma_t^{i,N}\closedbrac{0,x},\quad 1\leq i,j\leq K, x\in\mathbb{R}_+.
\end{align}
Using \eqref{eq:soft prelimit dynamics} and normalizing,
\begin{align*}
    \bar{\xi}_t^{i,N}\closedbrac{0,x}&=\bar{\alpha}_t^{ij,N}\closedbrac{0,x}+\sum_{j=1}^K\bar{E}^{ji,N}\brac{t,x}+\sum_{j=1}^KP_{ji}\brac{\bar{\mu}^{j,N}\brac{t}+\bar{e}^{j,N}\brac{t}}-\bar{\mu}^{i,N}\brac{t}-\bar{e}^{i,N}\brac{t}\\
    &\quad-\sum_{j=1}^KP_{ji}\brac{\bar{B}_t^{j,N}\closedbrac{0,x}-\bar{B}_{0-}^{j,N}\closedbrac{0,x}}+\bar{\beta}_t^{i,N}\brac{x,\infty}+\bar{\iota}^{i,N}\brac{t}-\sum_{j=1}^KP_{ji}\brac{\bar{\beta}_t^{j,N}\brac{x,\infty}+\bar{\iota}^{j,N}\brac{t}}.
\end{align*}
In vector notation, with $\textbf{1}=(1,\ldots,1)^T\in\R^K$,
\begin{align}\label{eq:soft PL dynamics vector}
    \bar{\xi}_t^N\closedbrac{0,x}=&\bar{\alpha}_t^N\closedbrac{0,x}+\bar{E}^{N,T}\brac{t,x}\textbf{1}-R\brac{\bar{\mu}^N\brac{t}+\bar{e}^N\brac{t}}\\
    &-P^T\brac{\bar{B}_t^N\closedbrac{0,x}-\bar{B}_{0-}^N\closedbrac{0,x}}+R\brac{\bar{\beta}_t^{N}\brac{x,\infty}+\bar{\iota}^N\brac{t}}.\nonumber
\end{align}
The matrix $R$ is an M-matrix as already argued, $\bar{\xi}^N$ is non-negative, and $\bar{\beta}_t^N\brac{x,\infty}+\bar{\iota}^N$ is non-decreasing and non-negative. 
We can now invoke the ORMT, yielding for every $x$,
\begin{align}\label{eq:xi^N and beta^N}
    &\bar{\xi}^N\closedbrac{0,x}=\Gamma_1\brac{\bar{\alpha}^N\closedbrac{0,x}+\bar{E}^{N,T}\brac{\cdot,x}\textbf{1}-R\brac{\bar{\mu}^N+\bar{e}^N}-P^T\bar{B}^N\closedbrac{0,x}+P^T\bar{B}_{0-}^N\closedbrac{0,x}},\nonumber\\
    &\bar{\beta}^N\brac{x,\infty}+\bar{\iota}^N=\Gamma_2\brac{\bar{\alpha}^N\closedbrac{0,x}+\bar{E}^{N,T}\brac{\cdot,x}\textbf{1}-R\brac{\bar{\mu}^N+\bar{e}^N}-P^T\bar{B}^N\closedbrac{0,x}+P^T\bar{B}_{0-}^N\closedbrac{0,x}}.
\end{align}

\begin{lemma}\label{lem:martingale}
Let $E^{ij,N}\brac{t,x}$ and $D^{i,N}\brac{t}$ be as in \eqref{eq:errors} and \eqref{eq:soft prelimit departures}. Then, for all $i,j,N,t,x$,
    \begin{align*}
        \expect{E^{ij,N}\brac{t,x}^2}\leq 41\expect{D^{i,N}\brac{t}}.
    \end{align*}
\end{lemma}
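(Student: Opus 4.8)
The plan is to show, for each fixed $x\in\R_+$, that $t\mapsto E^{ij,N}(t,x)$ is a square-integrable martingale in a discrete time indexed by the departure epochs of station $i$, with predictable quadratic variation dominated by $P_{ij}(1-P_{ij})\,\gamma_t^{i,N}[0,x]$, and then to conclude by optional stopping. This actually yields the much stronger bound $\expect{E^{ij,N}(t,x)^2}\le\tfrac14\expect{D^{i,N}(t)}$, and since the constant $41$ is not optimized it is enough to prove this. First, since $\gamma_\cdot^{i,N}[0,x]$ is a unit-jump counting process whose jumps occur at a subset of the departure epochs $\tau_n^{i,N}$, relation \eqref{104} gives
\[
  E^{ij,N}(t,x)=\sum_{n=1}^{D^{i,N}(t)}\chi_n\brac{\theta^{ij,N}(n)-P_{ij}},\qquad \sum_{n=1}^{D^{i,N}(t)}\chi_n=\gamma_t^{i,N}[0,x]\le D^{i,N}(t),
\]
where $\chi_n=\indicator{\text{the }n\text{-th job to complete service at station }i\text{ has deadline }\le x}$.

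Next, work with the natural filtration $\{\calF_t\}$ of the $N$-th network, with respect to which $T^{i,N}$, hence $D^{i,N}$, is adapted and the $\tau_n^{i,N}$ are stopping times. Because the discipline is non-preemptive, the identity and (fixed) deadline of the $n$-th job to depart station $i$ are determined the instant it enters service, so $\chi_n$ is $\calF_{\tau_n^{i,N}-}$-measurable; on the other hand the routing mark $\theta^{ij,N}(n)$ of that job is first used at $\tau_n^{i,N}$, and by the assumed mutual independence of the $\{\pi^{i,N}(n)\}_n$ and their independence from the service and arrival primitives — together with the recursive construction of the dynamics — $\theta^{ij,N}(n)$ is independent of $\calF_{\tau_n^{i,N}-}$ with $\prob{\theta^{ij,N}(n)=1}=P_{ij}$. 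Putting $\calG_m:=\calF_{\tau_{m+1}^{i,N}-}$ and $Y_m:=\sum_{n=1}^m\chi_n(\theta^{ij,N}(n)-P_{ij})$, it follows that $\{Y_m,\calG_m\}$ is a martingale with $\expect{(Y_m-Y_{m-1})^2\mid\calG_{m-1}}=\chi_mP_{ij}(1-P_{ij})$, and $D^{i,N}(t)$ is a $\{\calG_m\}$-stopping time because $\{D^{i,N}(t)\le m\}=\{\tau_{m+1}^{i,N}>t\}\in\calG_m$. Stopping $Y$ at $D^{i,N}(t)\wedge m$, using orthogonality of increments, and then letting $m\to\iy$ (Fatou on the left, monotone convergence on the right, valid since $D^{i,N}(t)<\iy$ a.s.) gives $\expect{E^{ij,N}(t,x)^2}\le P_{ij}(1-P_{ij})\expect{\gamma_t^{i,N}[0,x]}\le\tfrac14\expect{D^{i,N}(t)}$; the bound is trivial when $\expect{D^{i,N}(t)}=\iy$.

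The delicate point is the second paragraph: one needs to be certain, from the way the stochastic primitives are assembled into the dynamics, that the routing indicator $\theta^{ij,N}(n)$ is genuinely independent of $\calF_{\tau_n^{i,N}-}$ — i.e.\ that routing is resolved only at the moment of departure — while the deadline of that same job is already $\calF_{\tau_n^{i,N}-}$-measurable, and that the $\tau_n^{i,N}$ are stopping times even though $D^{i,N}$ is itself built from the routing marks of earlier departures (which may re-enter station $i$). Once the filtration is set up to reflect this order of information revelation, the remaining steps are routine.
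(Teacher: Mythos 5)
Your proof is correct and follows essentially the same route as the paper: both identify $E^{ij,N}\brac{\cdot,x}$ as a martingale stopped at $D^{i,N}\brac{t}$, with your increments $\chi_n$ being exactly the paper's $\gamma_{\tau_n}^{i,N}\closedbrac{0,x}-\gamma_{\tau_{n-1}}^{i,N}\closedbrac{0,x}$, your filtration $\calG_m=\calF_{\tau_{m+1}^{i,N}-}$ playing the role of the paper's explicitly generated discrete filtration $\calF_k$ (the "delicate point" you flag about the order in which deadlines and routing marks are revealed is handled there by simply declaring $\calF_k$ to contain $\gamma_{\tau_n}\closedbrac{0,y}$ and $\tau_n$ up to index $k+1$ but $\theta\brac{n}$ only up to $k$, and checking $\calF_k\subset\calG_k$ for the primitive-generated $\calG_k$), and the same optional-stopping/Fatou passage to the limit. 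The only substantive difference is the final inequality: the paper invokes Burkholder's inequality, which is the source of the constant $41$, whereas you use orthogonality of square-integrable martingale increments to obtain the sharper bound $P_{ij}\brac{1-P_{ij}}\expect{D^{i,N}\brac{t}}\le\tfrac14\expect{D^{i,N}\brac{t}}$, which of course implies the stated estimate.
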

\begin{proof}
We have
\begin{align*}
    E^{ij,N}\brac{t,x}&=\gamma_t^{ij,N}\closedbrac{0,x}-P_{ij}\gamma_t^{i,N}\closedbrac{0,x}\\
    &=\int_{\left[0,t\right]}\brac{\hat{\theta}^{ij,N}\brac{s}-P_{ij}}d\gamma_s^{i,N}\closedbrac{0,x}\\
    &=\sum_{n=1}^{D^{i,N}\brac{t}}\brac{\theta^{ij,N}\brac{n}-P_{ij}}\brac{\gamma_{\tau_{n}^{i,N}}^{i,N}\closedbrac{0,x}-\gamma_{\tau_{n-1}^{i,N}}^{i,N}\closedbrac{0,x}}.
\end{align*}
To simplify notation in this proof, we omit in what follows
the dependence on $i,j$ and $N$. Denote
\begin{align*}
    M_{k}\brac{x}&=\sum_{n=1}^{k}\brac{\theta\brac{n}-P}\brac{\gamma_{\tau_{n}}\closedbrac{0,x}-\gamma_{\tau_{n-1}}\closedbrac{0,x}}.
\end{align*}
We show that $M_k(x)$ is a martingale on the filtration
\begin{align*}
    \calig{F}_k=\sigma\brac{\theta\brac{n}: 1\le n\le k,\ 
\gamma_{\tau_n}\closedbrac{0,y}: 1\leq n\leq k+1,
y\geq 0,\ 
\tau_n: 1\le n\le k+1}.
\end{align*}
We first argue that $\theta\brac{k+1}$ is independent of $\calig{F}_{k}$. 
By assumption, $\theta\brac{k+1}$ is independent of
\[
\calig{G}_k:=\brac{\theta\brac{n},S\brac{t},\alpha_t\closedbrac{0,y},\quad t,y\geq 0,1\leq n\leq k}.
\]
By construction, $\calig{G}_k\supset\calig{F}_k$.
Therefore, $\theta\brac{k+1}$ is independent of $\calig{F}_{k}$. 
The process $M_k\brac{x}$ is adapted to $\{\calig{F}_k\}_{k\geq 1}$,
it satisfies $\expect{\abs{M_k\brac{x}}}\leq k<\infty$, and
\begin{align*}
    &\E\lt[\sum_{n=1}^{k+1}\brac{\theta\brac{n}-P}\brac{\gamma_{\tau_{n}}\closedbrac{0,x}-\gamma_{\tau_{n-1}}\closedbrac{0,x}}\middle|\calig{F}_k\rt]\\
    &=M_k\brac{x}+\brac{\gamma_{\tau_{k+1}}\closedbrac{0,x}-\gamma_{\tau_{k}}\closedbrac{0,x}}\E\lt[\theta\brac{k+1}-P\middle|\calig{F}_k\rt]\\
    &=M_k\brac{x}.
\end{align*}
Consequently it is a martingale.

Next, $D\brac{t}$ is a stopping time with respect to $\{\calig{F}_k\}_{k\geq 1}$, because $\{D\brac{t}\leq k\}=\{\tau_{k+1}> t\}$ and $\tau_{k+1}$ is $\calig{F}_k$-measurable.
Therefore, $M_{k\land D\brac{t}}$ is also a martingale, and by
Fatou's lemma and Burkholder's inequality (see \cite{hall1980}, Theorem 2.10)
\begin{align*}
    \expect{E^2\brac{t,x}}&=\expect{M_{D\brac{t}}^2\brac{x}}\\
    &\leq\liminf_{k\to\infty}\expect{M_{k\land D\brac{t}}^2\brac{x}}\\
    &\leq 41\liminf_{k\to\infty}\expect{\sum_{n=1}^{k\land D\brac{t}}\brac{\theta\brac{n}-P}^2\brac{\gamma_{\tau_n}\closedbrac{0,x}-\gamma_{\tau_{n-1}}\closedbrac{0,x}}^2}\\
    &\le 41\expect{D\brac{t}}.
\end{align*}
\end{proof}

\begin{proof}[Proof of Theorem \ref{thm: soft convergence}]
We write $c$ for a generic positive constant whose value may change from line to line.

We first show that $\bar{\xi}^N\to\xi$ in probability.
If $\al_1,\al_2\in\calM$ then $d_\calL(\al_1,\al_2)\le\sup_x|\al_1[0,x]-\al_2[0,x]|$.
Consequently it suffices to show for every $T$
\begin{align}\label{eq:supx xi converges}
    \sup_x\norm{\bar{\xi}^{N}\closedbrac{0,x}-\xi\closedbrac{0,x}}_T\to 0 \text{ in probability}.
\end{align}
By \eqref{eq:Lipschitz of Gamma}, \eqref{eq:xi and beta} and \eqref{eq:xi^N and beta^N},
\begin{align*}
    &\sup_x\norm{\bar{\xi}^{N}\closedbrac{0,x}-\xi\closedbrac{0,x}}_T\\
    &\leq L\sup_x\norm{\bar{\alpha}^N\closedbrac{0,x}-\alpha\closedbrac{0,x}}_T+L\sup_x\norm{\bar{E}^{N,T}\brac{\cdot,x}\textbf{1}}_T+L\sup_x\norm{P^T\bar{B}^N\closedbrac{0,x}+P^T\bar{B}_{0-}^N\closedbrac{0,x}}_T\\
    &\hspace{20pt}+L\norm{R\brac{\mu-\bar{\mu}^N}}_T+L\norm{R\bar{e}^N}_T.
\end{align*}
We now show that each of the terms converges in probability to 0.\\
    \textbullet\
It follows directly from the definition of the L\'evy metric that for
$a_1, a_2\in{\cal M}$,
\begin{align}\label{eq:d dominations}
\sup_{x\in\mathbb{R_+}}\abs{a_1\closedbrac{0,x}-a_2\closedbrac{0,x}}\leq d_{\calig{L}}\brac{a_1,a_2}+w\brac{a_2\closedbrac{0,\cdot},2d_{\cal L}\brac{a_1,a_2}}.
\end{align}
The fact that $\al^i\in\mathbb{D}_\calM^\uparrow$ implies that
for any $t\leq T$, $w\brac{\alpha_t\closedbrac{0,\cdot},\delta}\leq w\brac{\alpha_T\closedbrac{0,\cdot},\delta}$.
Since $\al_T\in\calM$ does not have atoms by assumption,
we have $w\brac{\alpha_T\closedbrac{0,\cdot},\epsilon}\to0$ as $\epsilon\to0$. 
Denote
\begin{align*}
    d_{\calig{L},T}\brac{\zeta_1,\zeta_2}=\sup_{t\in\closedbrac{0,T}}\max_{1\leq i\leq K}d_{\cal L}\brac{\zeta_{1,t}^i,\zeta_{2,t}^i},
\end{align*}
for $\zeta_1,\zeta_2\in\MVDK$.
By the convergence $\al^N\To\al$ and the continuity of $t\mapsto\al_t$,
it follows that $d_{\calig{L},T}\brac{\alpha^N,\alpha}\rightarrow{0}$ in probability.
Hence by \eqref{eq:d dominations},
    \begin{align*}
        \sup_x\norm{\bar{\alpha}^N\closedbrac{0,x}-\alpha\closedbrac{0,x}}_T\to 0\text{ in probability}.
    \end{align*}
\\
    \textbullet\
    For the second term, it suffices to show
    \begin{align*}
\sup_x\sup_{t\in\closedbrac{0,T}}\max_{1\leq i,j\leq K}\abs{\bar{E}^{ji,N}\brac{t,x}}\to 0\text{ in probability}.
    \end{align*}
To this end note that Lemma \ref{lem:martingale} implies 
    \begin{align*}
        \expect{\abs{\bar{E}^{ij,N}\brac{t,x}}^2}\leq 41N^{-2}\expect{D^{i,N}\brac{t}}\leq 41N^{-2}\expect{\sum_{i=1}^K\alpha^{i,N}_T\lt[0,\infty\rt)}\leq 41N^{-1}CT\to 0.    
    \end{align*}
    \textbullet\  The third term on the RHS
     is bounded by $2LK/N\to 0$.\\
    \textbullet\
Next, $\norm{R\brac{\mu-\bar{\mu}^N}}_T\leq c\norm{\mu-\bar{\mu}^N}_T\to 0$
in probability by the assumed convergence of $\bar\mu^N$ to $\mu$
and continuity of $\mu$.
\\
    \textbullet\
    Finally, by the definition of $\bar e^N$,
    \begin{align*}
        \bar{e}^{i,N}\brac{t}&=\bar{\beta}_t^{i,N}\left[0,\infty\right)+\bar{\iota}^{i,N}\brac{t}-\bar{\mu}^{i,N}\brac{t}\\
        &=-\frac{1}{N}+\frac{B^{i,N}\lt[0,\infty\rt)}{N}+\frac{S^i\brac{N\bar T^{i,N}\brac{t}}-N\bar{T}^{i,N}\brac{t}}{N}.
    \end{align*}
Now, by the law of large numbers for renewal processes, for any constant $c$,
$\sup_{t\in\closedbrac{0,c}}\abs{\frac{S^i\brac{Nt}-Nt}{N}}\to 0$ in probability.
Moreover, for fixed $T$, $\bar T^{i,N}(T)$ are tight. It follows that
\[
\sup_{t\in[0,T]}\Big|\frac{S^i\brac{N\bar T^{i,N}\brac{t}}-N\bar{T}^{i,N}\brac{t}}{N}\Big|\to0
\]
in probability. Consequently, $\|\bar e^N\|_T\to0$ in probability.
This completes the proof of \eqref{eq:supx xi converges}. We conclude that $\bar\xi^N\to\xi$.

The convergence of $\bar{\iota}^N$ and $\bar{\beta}^N$ to $\iota$ and $\beta$ is shown
similarly, by virtue of the Lipschitz property of $\Gamma_2$.
We omit the details. This shows
$(\bar\xi^N,\bar\beta^n,\bar\iota^N)\to(\xi,\beta,\iota)$ in probability,
and completes the proof.
\end{proof}

\section{Hard EDF networks}\label{sec3}

In \S\ref{sec31} the fluid model equations and the queueing model are
presented, as well as the main results, asserting uniqueness
and convergence. In \S\ref{sec32} auxiliary lemmas are established.
The proofs of the main results are presented in \S\ref{sec33}.

\subsection{Model and main results}\label{sec31}
\subsubsection{Fluid model} \label{sec:Hard fluid}

The hard version of the policy differs from the soft version in that, when the deadline expires,
fluid in the queue reneges. Therefore, we distinguish between the reneging process and the departure process:
\begin{alignat*}{2}
    &\beta^{is}\in\MVDup:\quad&&\beta_t^{is}\closedbrac{0,x}\text{ is the mass with deadlines in $\closedbrac{0,x}$ that has left the $i$-th queue}\\
    & && \text{by time $t$ due to service},\\
    &\beta^{ir}\in\MVDup:&&\beta_t^{ir}\closedbrac{0,x}\text{ is the mass with deadlines in $\closedbrac{0,x}$ that has left the $i$-th queue}\\
    & && \text{by time $t$ due to reneging}.
\end{alignat*}
The deadline is not fixed but is postponed by a constant
$\epsilon>0$ whenever mass migrates to a new service station.
We introduce $\gamma=\brac{\gamma^i}$ defined by
\begin{equation}\label{100}
\gamma_t^{i}\closedbrac{0,x}=\beta_t^{i,s}\closedbrac{0,x-\epsilon},
\end{equation}
(recalling the convention $[a,b]=\emptyset$ if $a>b$),
which expresses the mass that has left server $i$ by time $t$ with
its updated deadline in $\closedbrac{0,x}$.
The processes $\alpha$, $\mu$, $\xi$, $\iota$ are defined
as before.

We now describe the equations governing the fluid model. The fluid that is served and the fluid that reneges from the system sum up to the total fluid that leaves the queue, and the idle effort is the difference between the potential effort and the processed mass, as described by
\begin{align}
\label{107}
    &\beta_t^i\closedbrac{0,x}=\beta_t^{i,s}\closedbrac{0,x}+\beta_t^{i,r}\closedbrac{0,x}, \\
\label{108}
    &\iota^i\brac{t}=\mu^i\brac{t}-\beta_t^{i,s}\left[0,\infty\right).
\end{align}
Reneging does not occur prior to deadline,
as expressed by
\begin{equation}\label{eq:renege after deadline}
\beta_t^r\brac{t,\infty}=0.
\end{equation}
The content of the $i$-th queue is given by
\begin{align}
    \xi_t^i\closedbrac{0,x}&=\alpha_t^i\closedbrac{0,x}+\sum_{j=1}^KP_{ji}\gamma_t^{j,s}\closedbrac{0,x}-\beta_t^i\closedbrac{0,x}\label{eq:Hard fluid dynamics}.
\end{align}
The work conservation and EDF conditions are expressed through
\begin{alignat}{2}
    &\int\xi_t^i\closedbrac{0,x}d\iota^i\brac{t}=0,&&\quad 1\leq i\leq K,\label{eq:hard fluid work cons}\\
    &\int\xi_t^i\closedbrac{0,x}d\beta_t^{i}\brac{x,\infty}=0,&&\quad1\leq i\leq K\label{eq:hard fluid EDF cond}.
\end{alignat}
To complete the description we define the processes
\begin{align}
    &\rho^{i}\brac{t}=\beta_t^{i,r}\closedbrac{0,t}=\beta_t^{i,r}\left[0,\infty\right),\label{eq:rho def}\\
    &\sigma^i\brac{t}=\inf\text{supp}\closedbrac{\xi_t^i}.
\label{101}
\end{align}
Note that $\beta_t^{ir}\closedbrac{0,x}=\rho^i\brac{t\land x}$.
The quantity $\rho^i(t)$ expresses the cumulative reneging from buffer $i$.
The significance of $\sigma^i(t)$, the left edge of the support of $\xi^i$,
is that reneging from buffer $i$ occurs only at times $t$ when $\sigma^i(t)=t$.
Thus
\begin{alignat}{2}
    &\xi_t^i\left[0,t\right)=0,&&\quad1\leq i\leq K,\label{eq:hard fluid hardness}\\
    &\int\indicator{\sigma^i\brac{t}>t}d\rho^i\brac{t}=0,&&\quad 1\leq i\leq K. 
    \label{103}
\end{alignat}
We refer to \eqref{100}--\eqref{103} as the {\it fluid model equations}.
A solution to these equations for data $(\al,\mu)$ is a tuple
$(\xi,\beta,\beta^s,\beta^r,\rho,\iota)$ for which these equations hold.
Note that it is possible to recover $\beta$ and $\rho$ from $\beta^s$ and $\beta^r$,
and vice versa.
Therefore, we sometimes refer to a tuple $(\xi,\beta,\rho,\iota)$
or $(\xi,\beta^s,\beta^r,\iota)$ as a solution.

\begin{assumption}\label{ass:data assumptions}
\begin{enumerate}
    \item $\al$ takes the form
$\alpha_t(B)=\xi_{0-}(B)+\hat{\alpha}_t(B)$,
where $\xi_{0-}\in{\cal M}_{\sim}^K$, and $\hat{\alpha}\in\mathbb{C}_{{\cal M}_\sim}^{\uparrow K}$ satisfies for every $1\leq i\leq K$
    \begin{align*}
        \hat{\alpha}_t^i(B)=\int_0^ta^i_s(B)ds,\quad t\geq 0,B\in\calB(\R_+),
    \end{align*}
where for each $t$ and $i$, $a^i_t$ is a finite measure on $[t,\iy)$, and
$t\mapsto a^i_t(B)$ is measurable.
Moreover, $\lim_{\del\downarrow0}\sup_{s\in[0,t]}a^i_s[s,s+\del]=0$.
\item $\mu$ takes the form
    \begin{align*}
        \mu\brac{t}=\int_0^tm\brac{s}ds,\quad t\geq 0,
    \end{align*}
for a Borel measurable function $m:\left[0,\infty\right)\to\mathbb{R}_+^K$
satisfying $\min_i\inf_{s\in\closedbrac{0,t}}m^i\brac{s}> 0$ for every $t$.
\end{enumerate}
\end{assumption}
\begin{theorem}\label{thm:uniqueness hard}
Suppose $\brac{\alpha,\mu}$ satisfies Assumption \ref{ass:data assumptions}. Then there exists a unique solution $\brac{\xi,\beta,\beta^s,\beta^r,\rho,\iota}$ in $\mathbb{D}_{{\cal M}}^{K}\times(\mathbb{D}_{{\cal M}}^{K\uparrow})^3\times(\mathbb{D}_{+}^{K\uparrow})^2$ to the fluid model. Moreover, $\beta^s$ satisfies Assumption \ref{ass:data assumptions}.1.
\end{theorem}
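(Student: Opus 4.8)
The plan is to establish existence and uniqueness by an induction over a sequence of squares in the time--deadline plane, exploiting the dynamic-deadline assumption. Concretely, for $k\ge1$ set $Q_k = [0,k\epsilon]\times[0,k\epsilon]$ (times $\times$ deadlines), and prove by induction on $k$ that the fluid model equations \eqref{100}--\eqref{103}, restricted to $t\le k\epsilon$ and $x\le k\epsilon$, have a unique solution, and that the resulting $\beta^{i,s}$ restricted to this range again satisfies Assumption \ref{ass:data assumptions}.1 (i.e.\ it is absolutely continuous in $t$ with a density measure whose mass near the diagonal vanishes uniformly). The key structural point making the induction work is relation \eqref{100}, $\gamma_t^i[0,x] = \beta_t^{i,s}[0,x-\epsilon]$: the endogenous arrivals into any station by time $t$ carrying updated deadline $\le x$ depend on the service output of the previous stage only through deadlines $\le x-\epsilon$, which — because reneging from buffer $i$ happens only when $\sigma^i(t)=t$, so served mass always has deadline $\ge$ the current time minus a controlled amount — have already been resolved at an earlier stage of the induction, on the square $Q_{k-1}$. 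Thus on $Q_k$ the arrival data $\alpha_t^i[0,x] + \sum_j P_{ji}\gamma_t^{j,s}[0,x]$ is a \emph{known} quantity, inherited from $Q_{k-1}$, so the equations on $Q_k$ decouple across stations.

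Granting that, the single-station problem on each square is solved by invoking the single-node hard-EDF results of \cite{atar2018}. The plan is: (i) given the total (exogenous plus endogenous) arrival vector measure process into station $i$ on the current square, which satisfies Assumption \ref{ass:data assumptions}.1 by the inductive hypothesis and the vanishing-near-diagonal property of $a^i_s$, feed it together with $\mu^i$ into the single-node MVSM/hard-EDF machinery to obtain a unique $(\xi^i,\beta^i,\beta^{i,s},\beta^{i,r},\rho^i,\iota^i)$ satisfying \eqref{107}--\eqref{103} for $t\le k\epsilon$; (ii) verify that the single-node output $\beta^{i,s}$ restricted to deadlines $\le k\epsilon$ is absolutely continuous in $t$ with a density measure avoiding the diagonal — this is where one uses that $m^i>0$ bounds service rate away from zero and that the incoming density measure is itself diagonal-avoiding, so the served mass at time $t$ always has deadline bounded below by roughly $t-(\text{const})$, keeping $\beta^{i,s}$ smooth; (iii) push $\beta^{i,s}$ forward through \eqref{100} and the routing matrix $P$ to supply the endogenous arrival data on the next square $Q_{k+1}$, checking it lies in $\mathbb{C}_{\calM_\sim}^{\uparrow}$ and satisfies Assumption \ref{ass:data assumptions}.1, which closes the induction.

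Two consistency checks must be handled to patch the squares together. First, the solution built on $Q_k$ must agree with the one built on $Q_{k-1}$ on the overlap; this follows from uniqueness of the single-node problem together with the fact that the fluid model equations are causal in $t$ and, via \eqref{100}, that the deadline coordinate only ever shifts \emph{upward} by $\epsilon$, so no information flows back from large $x$ to small $x$ — one should phrase the induction as producing a consistent family of restrictions whose union is the global solution. Second, one must argue that the EDF condition \eqref{eq:hard fluid EDF cond} and the work-conservation condition \eqref{eq:hard fluid work cons}, which are stated for all $x\in\R_+$, are not violated by the square-by-square construction: for $x$ in the range handled at stage $k$ this is immediate from the single-node guarantees, and for $x$ beyond the current range one uses that $\xi_t^i[0,x]$ has not yet been ``touched'' (the service and reneging so far involve only smaller deadlines, as enforced by \eqref{eq:hard fluid hardness} and the structure of \eqref{100}), so the conditions hold vacuously until the relevant stage.

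\textbf{Main obstacle.} I expect the delicate step to be (ii): propagating Assumption \ref{ass:data assumptions}.1 through the single-node hard-EDF solver, i.e.\ showing that the output reneging-free served-mass measure stays absolutely continuous in time with density mass vanishing uniformly near the diagonal $\{x=t\}$. This is precisely the regularity point the introduction flags as the reason the fixed-deadline case is hard; here the $\epsilon$-shift in \eqref{100} creates a safety buffer, but one still needs a quantitative estimate showing that a job put into service at time $t$ cannot have deadline within $\del$ of $t$ for $\del$ small, uniformly, which requires combining the lower bound on $m^i$, the EDF priority (so the earliest-deadline mass is served first and either cleared or reneged before it can pile up against the diagonal), and the diagonal-avoiding property of the incoming data. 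Making this estimate uniform over the induction (so the ``const'' in $\beta_t^{i,s}$-has-deadline-$\ge t-\text{const}$ does not degrade as $k\to\infty$) is the technical heart of the argument.
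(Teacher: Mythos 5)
Your proposal matches the paper's proof in all essentials: the paper proves Theorem \ref{thm:uniqueness hard} by exactly this induction over squares $[0,n\eps)^2$, decoupling the stations on each square because $\gamma_t^j[0,x]=\beta_t^{j,s}[0,x-\eps]$ involves only deadlines already resolved on the previous square (extended to times $t>(n-1)\eps$ via the no-service-after-deadline identity $\beta_t^{j,s}[0,y]=\beta_{y^+}^{j,s}[0,y]$ for $t>y$), invoking the single-node results of \cite{atar2018} on each station, and patching via a non-anticipation lemma (Lemma \ref{lem:rho nonanticipation}). The ``main obstacle'' you flag is precisely the paper's Lemma \ref{lem:beta satisfies 1}, though it turns out easier than you fear: the $\eps$-shift makes the output density vanish identically on $[s,s+\eps)$, and absolute continuity in $t$ follows directly from the service-rate bound $\beta^s_t(\R_+)-\beta^s_\tau(\R_+)\le\int_\tau^t m(s)\,ds$ plus a disintegration argument.
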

We prove this result in \S\ref{sec33}.
\subsubsection{Queueing model, scaling}
There are many ingredients of the model that are the same as in \S\ref{sec2}, which
we will not repeat.
%
The processes $\alpha_t^{i,N}$, $\mu^{i,N}\brac{t}$, $\xi_t^{i,N}$,
$\iota^{i,N}(t)$ have the same meaning as in the soft EDF model.
$\beta_t^{i,s,N}\closedbrac{0,x}$ is the number of jobs that have left the $i$-th queue by time $t$ with deadlines in $\closedbrac{0,x}$ to get served,
$\beta_t^{i,r,N}\closedbrac{0,x}$ is the number of jobs that have reneged from the $i$-th queue by time $t$ with deadlines in $\closedbrac{0,x}$ when their deadlines had expired.
Also, the cumulative reneging count is denoted by
$\rho^{i,N}\brac{t}$ and satisfies $\rho^{i,N}\brac{t}=\beta_t^{i,r,N}\lt[0,\infty\rt)$.

As already mentioned, the deadline of a job is postponed by $\eps$
every time it migrates from one station to another.
It is possible that the deadline of a job at a station expires while
it is in service at that station. In this case, it is assumed that
the job does not renege
(in agreement with the model from \cite{atar2018}).
A more complicated scenario is when the work associated with
this job is sufficiently large that it is not complete even
$\eps$ units of time after this deadline expires. At this time
the deadline at the next station also expires.
We prefer not to deal with this possibility because it complicates
the notation. Because $\eps$ is fixed whereas the service times
are downscaled, an event like that becomes less and less probable
as $N$ increases. Our assumption, that will allow us to avoid this scenario
altogether, is that the service time distributions have bounded supports.
This assures that for $N$ large enough this scenario does not occur.
The description of the model equations is given for sufficiently large $N$.

Next, the processes $S^i(t)$, $D^{i,N}(t)$, $\gamma_t^{ij,N}$, $B_t^{i,N}$
and $T^{i,N}(t)$
as well as the relations between them are as in the soft EDF model.
Note that the assumption regarding service time distribution supports
is really an assumption about the processes $S^i$.


We turn to mathematically describe the relations between the processes.
We have
\begin{align*}
    &T^{i,N}\brac{t}=\int_0^tB_s^{i,N}\lt[0,\infty\rt)d\mu^{i,N}\brac{s},\\
    &\iota^{i,N}\brac{t}=\mu^{i,N}\brac{t}-T^{i,N}\brac{t}.
\end{align*}
\begin{align}
    &\beta_t^{i,N}\closedbrac{0,x}=\beta_t^{is,N}\closedbrac{0,x}+\beta_t^{ir,N}\closedbrac{0,x},\\
    &D^{i,N}\brac{t}=\gamma^{i,N}_t\left[0,\infty\right)=S^i\brac{T^{i,N}\brac{t}}-1,\\
    &\gamma_t^{i,N}\closedbrac{0,x}=\beta_t^{i,s,N}\closedbrac{0,x-\epsilon}-B_t^{i,N}\closedbrac{0,x-\epsilon}+B_{0-}^{i,N}\closedbrac{0,x-\epsilon}.\label{eq:hard prelimit gamma beta}
\end{align}
\begin{align}\label{eq:hard Jackson dynamics}
    \xi_t^{i,N}\closedbrac{0,x}&=\alpha_t^{i,N}\closedbrac{0,x}+\sum_{j=1}^K\gamma_t^{ji,N}\closedbrac{0,x}-\beta_t^{i,N}\closedbrac{0,x}.
\end{align}
The work conservation condition, the EDF policy, the hard EDF condition and the latest reneging condition are expressed through
\begin{alignat}{2} 
    \int&\xi_t^{i,N}\closedbrac{0,x}d\iota^{i,N}\brac{t}=0,&&\quad1\leq i\leq K\label{eq:hard prelimit work cons}\\
    \int&\xi_t^{i,N}\closedbrac{0,x}d\beta_t^{i,N}\brac{x,\infty}=0,&&\quad1\leq i\leq K,\label{eq:hard prelimit EDF cond}\\
    &\xi_t^{i,N}\left[0,t\right]=0,&&\quad 1\leq i\leq K,\label{eq:hard prelimit hardness}\\
    \int&\mathbbm{1}_{\{\sigma^{i,N}\brac{t}>t\}}d\rho^{i,N}\brac{t}=0,&&\quad1\leq i\leq K,
\end{alignat}
where $\sigma^{i,N}(t)=\inf\text{supp}\,\xi_t^{i,N}$.
Finally, the assumptions on $\pi^{i,N}\brac{n}$, the definition of
$\theta^{ij,N}\brac{n}$, $\tau_n^{i,N}$, $\hat{\theta}^{ij,N}\brac{t}$
and the relations between these processes are as in the soft EDF model.

Let us define the error processes as
\begin{align}\label{eq:errors hard}
    &e^{i,N}\brac{t}=\beta_t^{i,s,N}\left[0,\infty\right)+\iota^{i,N}\brac{t}-\mu^{i,N}\brac{t},\quad 1\leq i\leq K,\nonumber\\
    &E^{ij,N}\brac{t,x}=\gamma_t^{ij,N}\closedbrac{0,x}-P_{ij}\gamma_t^{i,N}\closedbrac{0,x},\quad 1\leq i,j\leq K, x\in\mathbb{R}_+.
\end{align}

\begin{theorem}\label{thm:hard convergence}
Assume $(\bar\al^N,\bar\mu^N)\To(\al,\mu)$, where $(\al,\mu)$
satisfies Assumption \ref{ass:data assumptions}.
Then 
\begin{align}\label{112}
    \brac{\bar{\xi}^N,\bar{\beta}^N,\bar\beta^{s,N},\bar\beta^{r,N},\bar{\iota}^N,\bar{\rho}^N,\bar{e}^N,\bar{E}^N}\To\brac{\xi,\beta,\beta^s,\beta^r,\iota,\rho,0,0},
\end{align}
where $\brac{\xi,\beta,\beta^s,\beta^r,\iota,\rho}$ is the unique solution of the fluid model equations
corresponding to $(\al,\mu)$.
\end{theorem}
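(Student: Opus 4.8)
The plan is to reduce the statement to the single node hard EDF fluid limit of \cite{atar2018} by an induction over squares in the time--deadline plane, the side length growing in steps of $\eps$ (the deadline--postponement constant). Since the limit in \eqref{112} is deterministic, it suffices to prove convergence in probability. First I would dispose of the error terms exactly as in the proof of Theorem~\ref{thm: soft convergence}: $\bar B^N\to0$ because $B^{i,N}_t[0,x]\in\{0,1\}$; $\bar e^N\to0$ by the renewal law of large numbers for $S^i$ together with tightness of $\bar T^{i,N}(T)$, which holds since $T^{i,N}\le\mu^{i,N}$ and $\bar\mu^N\To\mu$; and, using a bound $\E[\sum_i\al^{i,N}_t[0,\iy)]\le cNt$ (available as in Theorem~\ref{thm: soft convergence}) together with the convergence of $P$ to deduce $\E[D^{i,N}(t)]\le cNt$, Lemma~\ref{lem:martingale} gives $\bar E^N\to0$ uniformly on compact sets in $(t,x)$. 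It then remains to identify the limit of $(\bar\xi^N,\bar\beta^{s,N},\bar\beta^{r,N},\bar\iota^N,\bar\rho^N)$ with the solution of the fluid model equations, which is unique by Theorem~\ref{thm:uniqueness hard}.

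For the induction, fix $T>0$ and for $k\ge0$ let $(H_k)$ be the assertion that the restrictions to deadlines $x\le k\eps$ of the processes $\bar\xi^{i,N}[0,x]$, $\bar\beta^{is,N}[0,x]$, $\bar\beta^{ir,N}[0,x]$ (equivalently $\bar\rho^{i,N}$ on $[0,k\eps]$) and of $\bar\iota^{i,N}$ converge in probability, uniformly on $[0,T]$, to the corresponding restrictions of the unique fluid solution. By the hardness relations \eqref{eq:hard prelimit hardness}, \eqref{eq:hard fluid hardness}, all mass with deadline $\le k\eps$ has left every buffer by time $k\eps$ (up to the asymptotically vanishing mass in service), so $(H_k)$ is really a statement on $[0,k\eps]$; the case $k=0$ is vacuous. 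For the step from $k$ to $k+1$, \eqref{eq:hard prelimit gamma beta} and \eqref{eq:errors hard} give, with the convention $[0,y]=\emptyset$ for $y<0$,
\[
\sum_{j=1}^K\gamma^{ji,N}_t[0,(k+1)\eps]=\sum_{j=1}^K\bigl(P_{ji}\,\beta^{js,N}_t[0,k\eps]+E^{ji,N}(t,(k+1)\eps)\bigr)+O(1),
\]
the $O(1)$ absorbing the bounded $B^{j,N}$ terms; by $(H_k)$, $\bar B^N\to0$ and $\bar E^N\to0$ the normalization of the left side converges uniformly on $[0,T]$ to $\sum_jP_{ji}\beta^{js}_t[0,k\eps]$, the endogenous fluid input into queue $i$ with deadline $\le(k+1)\eps$. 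Since EDF gives the deadline--$\le(k+1)\eps$ jobs priority over all others, the $i$-th station, restricted to deadlines $\le(k+1)\eps$, is an asymptotically negligible perturbation of a single node hard EDF queue with service data $\mu^{i,N}$ and arrival data $\check\al^{i,N}:=\al^{i,N}[0,\cdot\,]+\sum_j\gamma^{ji,N}[0,\cdot\,]$, for which $\bar{\check\al}^{i,N}\To\check\al^i:=\al^i[0,\cdot\,]+\sum_jP_{ji}\beta^{js}[0,\,\cdot-\eps]$ on the window $[0,(k+1)\eps]$, and \eqref{eq:hard prelimit work cons}--\eqref{eq:hard prelimit EDF cond} together with the relations following them are exactly the single node hard EDF equations. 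Applying the single node hard EDF convergence theorem of \cite{atar2018} — whose proof only requires convergence of the normalized arrival and service data and vanishing of the effort error, both already in hand — to each $i$ yields $(H_{k+1})$. Iterating until $k\eps\ge T$ and then letting $T\to\iy$ gives \eqref{112}, once one checks, by passing \eqref{100}--\eqref{103} to the limit (the integral identities \eqref{eq:hard fluid work cons}, \eqref{eq:hard fluid EDF cond}, \eqref{103} via continuity of the limiting paths, as in \cite{atar2018}), that the limit solves the network fluid model; Theorem~\ref{thm:uniqueness hard} then identifies it.

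\textbf{Main obstacle.} The hard part will be to justify at each step that the effective arrival data meet the hypotheses of the single node theorem, namely that the limiting arrival measures $\check\al^i_t$ are atomless and that no fluid accumulates at or just below its deadline, in the sense of Assumption~\ref{ass:data assumptions}.1. This is automatic for the exogenous component, and for the endogenous component it is precisely where the $\eps$--postponement is used: a job routed to a new station arrives with its deadline raised by $\eps$, whereas the service times are $O(1/N)$ with compact support, so for large $N$ it reaches the new station a time bounded below by essentially $\eps$ before its updated deadline; hence $\gamma^{j,N}$ carries no mass near the diagonal and the dangerous scenario in which a deadline expires during service is excluded. Theorem~\ref{thm:uniqueness hard} already supplies the limit--level regularity, since it asserts that $\beta^s$ satisfies Assumption~\ref{ass:data assumptions}.1; what remains, and is the most delicate portion of the argument, is the matching prelimit control of the exogenous and endogenous arrival streams and the uniform convergence $\bar{\check\al}^{i,N}\To\check\al^i$ over the relevant deadline window.
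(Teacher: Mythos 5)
Your overall architecture is genuinely different from the paper's, and it contains a gap that is not merely a matter of missing details. The paper does \emph{not} prove convergence by applying the single-node stochastic convergence theorem station by station. It first proves $C$-tightness of $\{\bar\rho^N\}$ (and hence of the whole tuple) in Lemma~\ref{lem:rho is tight}, then characterizes every subsequential limit directly from the prelimit equations, and finally invokes the uniqueness of Theorem~\ref{thm:uniqueness hard} to upgrade subsequential convergence to full convergence. The single-node results of \cite{atar2018} are used only at the level of the \emph{deterministic} fluid model (Lemmas~\ref{thm:5.4 in atar18}--\ref{lem:beta satisfies 1}, feeding into the uniqueness proof), never as a black box applied to a station of the stochastic network. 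The induction over $\eps$-squares that you propose does appear in the paper, but in the uniqueness proof and inside the tightness and atomlessness arguments, not as the skeleton of the convergence proof.

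The gap is in your inductive step: Lemma~\ref{thm:5.4 in atar18} is a statement about the single-node \emph{queueing model}, whose arrival process is a primitive, independent of the service renewal process $S^i$. The total arrival stream into station $i$, $\check\al^{i,N}=\al^{i,N}+\sum_j\gamma^{ji,N}$, is endogenous: it depends on the routing variables and on the service processes of the other stations, and through feedback loops on $S^i$ itself. So station $i$ restricted to deadlines $\le(k+1)\eps$ is not an instance of the model to which that theorem applies, and the claim that its proof ``only requires convergence of the normalized arrival and service data'' is an assertion about the internals of \cite{atar2018} that you would have to verify and that the authors evidently did not rely on. More importantly, the part you defer as ``the most delicate portion'' --- prelimit control of the endogenous arrival streams near the diagonal $x=t$, which is what guarantees tightness of $\bar\rho^N$ and the limiting reneging condition \eqref{103} --- is precisely the hard core of the paper's proof: the chain of inequalities \eqref{eq:rhotight1}--\eqref{eq:rhotight4} propagating modulus-of-continuity bounds through $\eps$-shifts of the arrival measures, and the $C_{N,M},D_{N,M},G_{N,M}$ estimates (with their own nested induction) showing that no subsequential limit reneges while $\sigma^i(t)>t$. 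Deferring exactly this content means the proposal identifies the right difficulty but does not overcome it; as written it is a plausible reduction strategy, not a proof. A correct execution along your lines would additionally need a prelimit analogue of the truncation-consistency Lemma~\ref{lem:rho nonanticipation} (the paper proves it only for the fluid model) to identify the restricted station-$i$ dynamics with the auxiliary single-node system.
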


\subsection{Auxiliary lemmas}\label{sec32}

When setting $K=1$ and $P=0$, the model coincides with the single server model as described in \cite{atar2018}, for which uniqueness of the solution of the fluid model equation and convergence have been established in \cite{atar2018}.
\begin{lemma}
\label{thm:4.10 in atar18}
Let $K=1$ and $P=0$.
Suppose $\brac{\alpha,\mu}$ satisfies Assumption \ref{ass:data assumptions}.
Then there exists a unique solution $(\xi,\beta,\iota,\rho)$ to the fluid model equations,
and the solution lies in
$\MVCnoatoms\times\MVCupnoatoms\times\CCup\times\CCup$.
\label{thm:5.4 in atar18}
Assume, moreover, that $(\bar\al^N,\bar\mu^N)\To(\al,\mu)$.
Then
$\brac{\bar{\xi}^N,\bar{\beta}^N,\bar{\iota}^N,\bar{\rho}^N}\Rightarrow\brac{\xi,\beta,\iota,\rho}$.
\end{lemma}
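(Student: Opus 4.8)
The plan is to reduce, essentially by inspection, the $K=1$, $P=0$ case of both the fluid model equations \eqref{100}--\eqref{103} and the queueing model of \S\ref{sec31} to the single-node hard EDF model of \cite{atar2018}, and then to quote the uniqueness and regularity statement \cite[Theorem~4.10]{atar2018} together with the fluid-limit statement \cite[Theorem~5.4]{atar2018}. Essentially all the work is in verifying that this reduction is exact, so that no spurious term survives and the data hypotheses line up.

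First I would carry out the reduction at the fluid level. With $K=1$ the routing matrix is the scalar $P=0$, so the $P$-weighted term $\sum_{j=1}^K P_{ji}\gamma_t^{j,s}[0,x]$ in \eqref{eq:Hard fluid dynamics} vanishes and that equation becomes $\xi_t[0,x]=\alpha_t[0,x]-\beta_t[0,x]$; since there is no migration between stations, the definition \eqref{100} of $\gamma$ is never fed back into the dynamics and the postponement constant $\epsilon$ is irrelevant, so $\beta^s$ merely records served mass leaving the single queue. Equations \eqref{107}--\eqref{eq:hard fluid hardness} together with \eqref{103} then coincide verbatim with the single-node hard EDF fluid model equations of \cite{atar2018}, with $\rho$ the cumulative reneging and $\sigma(t)=\inf\supp[\xi_t]$. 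Next I would check that Assumption~\ref{ass:data assumptions} is precisely the data hypothesis imposed in \cite{atar2018}: $\alpha_0=\xi_{0-}$ atomless, $\hat\alpha$ absolutely continuous in $t$ with instantaneous arrival measure $a_t$ supported on $[t,\infty)$ and satisfying the no-mass-near-the-deadline bound $\lim_{\delta\downarrow0}\sup_s a_s[s,s+\delta]=0$, and $\mu$ absolutely continuous with a rate bounded away from $0$ on compacts. Hence \cite[Theorem~4.10]{atar2018} applies and yields existence, uniqueness, and membership of $(\xi,\beta,\iota,\rho)$ in $\MVCnoatoms\times\MVCupnoatoms\times\CCup\times\CCup$; the atomlessness of $\xi_t$ persists because $\alpha_t$ is atomless and reneging removes mass only from the point $\{t\}$, where $\xi_t$ carries none.

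Then I would do the same at the prelimit level. Here $P=0$ forces every departing job to leave the system, so $\gamma_t^{ij,N}\equiv 0$ for $j\in\{1,\dots,K\}$ and $\gamma_t^{i0,N}=\gamma_t^{i,N}=D^{i,N}$; consequently $E^{ij,N}\equiv 0$ for $j\ge1$, and \eqref{eq:hard Jackson dynamics} reduces to $\xi_t^N[0,x]=\alpha_t^N[0,x]-\beta_t^N[0,x]$, i.e. there are no endogenous arrivals. The $\epsilon$-postponement never triggers, and the bounded-support assumption on the service time distributions, while stronger than what \cite{atar2018} requires, is harmless. Thus the $N$-th system is literally the single-server hard EDF queue of \cite{atar2018}, and \cite[Theorem~5.4]{atar2018} gives $(\bar\xi^N,\bar\beta^N,\bar\iota^N,\bar\rho^N)\To(\xi,\beta,\iota,\rho)$ under the assumed convergence $(\bar\alpha^N,\bar\mu^N)\To(\alpha,\mu)$; the network bookkeeping error terms are immaterial here, since $\bar E^N\equiv 0$ and $\bar e^N\to 0$ is part of the cited result.

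The hard part will be nothing substantive: the only point requiring genuine care is the bookkeeping — confirming that the triple $(\beta,\beta^s,\beta^r)$, the auxiliary process $\gamma$, and the postponement constant $\epsilon$ all degenerate in exactly the right way, and that Assumption~\ref{ass:data assumptions} is neither weaker nor stronger than the data hypothesis used in \cite{atar2018}, so that its conclusions can be invoked verbatim.
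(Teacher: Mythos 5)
Your proposal is correct and follows exactly the paper's route: the paper proves this lemma by simply observing that for $K=1$, $P=0$ the model coincides with the single-server model of \cite{atar2018} and citing Theorems 4.10 and 5.4 there. Your additional bookkeeping verifying that the routing terms, $\gamma$, and the $\epsilon$-postponement all degenerate correctly is sound and merely makes explicit what the paper leaves implicit.
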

\proof This is the content of Theorems 4.10 and 5.4 in \cite{atar2018}.
\qed


\begin{lemma}\label{lem:rho nonanticipation}
Suppose $\brac{\alpha,\mu}$
satisfies Assumption \ref{ass:data assumptions} and fix $\tau>0$. Let
$\alpha^\circ\in\mathbb{D}_{\cal M}^\uparrow$ be defined by
$\alpha_t^\circ\brac{A}=\alpha_t\brac{A\cap\closedbrac{0,\tau}}$
for all $A\in\calB(\R_+)$ and $t\in\R_+$.
Let $\brac{\xi^\circ,\beta^\circ,\iota^\circ,\rho^\circ}$ and $\brac{\xi,\beta,\iota,\rho}$ be the unique solutions of the fluid model equations (for $K=1$)
corresponding to $\brac{\alpha^\circ,\mu}$ and $\brac{\alpha,\mu}$, respectively.
Then for all $x\in[0,\tau]$, $t\in[0,\tau]$ one has
$\rho\brac{t}=\rho^\circ\brac{t}$, $\beta_t^r\closedbrac{0,x}=\beta_t^{\circ,r}\closedbrac{0,x}$, $\beta_t^s\closedbrac{0,x}=\beta_t^{\circ,s}\closedbrac{0,x}$, and $\xi_t\closedbrac{0,x}=\xi_t^\circ\closedbrac{0,x}$.
\end{lemma}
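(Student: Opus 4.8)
The statement is a locality (finite-propagation-speed) property: truncating the arrival measure at deadline level $\tau$ does not affect the solution on the space-time square $[0,\tau]\times[0,\tau]$. The plan is to show that the restriction of $(\xi,\beta,\iota,\rho)$ to this square solves the fluid model equations for data $(\al^\circ,\mu)$ on the square, and then invoke the uniqueness part of Lemma~\ref{thm:4.10 in atar18}. The intuition is that work with deadline $>\tau$ has lower EDF priority and reneges only after time $\tau$, so it is ``invisible'' to the dynamics at deadline levels $\le\tau$ up to time $\tau$; and the hard-EDF condition $\xi_t^i[0,t]=0$ together with \eqref{eq:renege after deadline} means that at time $t\le\tau$ nothing with deadline in $(\tau,\infty)$ has yet reneged, so $\rho$ sees only the truncated mass.

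The key steps, in order: (i) Define candidate truncated solutions by $\xi^\dagger_t(A)=\xi_t(A\cap[0,\tau])$, $\beta^\dagger_t(A)=\beta_t(A\cap[0,\tau])$, and similarly $\beta^{\dagger,s},\beta^{\dagger,r}$; keep $\iota^\dagger=\iota$ and set $\rho^\dagger(t)=\rho(t\wedge\tau)$ (or equivalently $\beta^{\dagger,r}_t[0,\infty)$). Actually, more carefully, I would try to show directly that $(\xi^\circ,\beta^\circ,\iota^\circ,\rho^\circ)$ and $(\xi,\beta,\iota,\rho)$ agree on the square by checking that $(\xi,\beta,\iota,\rho)$ \emph{restricted to deadline levels $[0,\tau]$ and times $[0,\tau]$} satisfies every fluid model equation \eqref{100}--\eqref{103} with $\al$ replaced by $\al^\circ$. (ii) For $x\le\tau$ the balance equation \eqref{eq:Hard fluid dynamics} for $\al$ and for $\al^\circ$ literally coincide, since $\al_t[0,x]=\al^\circ_t[0,x]$ for $x\le\tau$, and the routing term involves $\gamma^{j,s}_t[0,x]$ which is unchanged. (iii) The work-conservation and EDF conditions \eqref{eq:hard fluid work cons}, \eqref{eq:hard fluid EDF cond}, and the reneging conditions \eqref{eq:hard fluid hardness}, \eqref{103}, only test $\xi^i_t[0,x]$ for $x\le\tau$, so they transfer directly. (iv) The only genuinely non-trivial point is to verify that for $t\le\tau$ one has $\beta^r_t(\tau,\infty)=0$ and hence $\rho(t)=\beta^r_t[0,\tau]=\beta^{\circ,r}_t[0,\tau]$, and that the $\iota$ equation \eqref{eq:hard fluid work cons} survives when $\al$ is replaced by $\al^\circ$ at level $x=\infty$ — but here one should restrict to $x=\tau$: for data $\al^\circ$, all mass has deadline $\le\tau$, so by \eqref{eq:hard fluid hardness} and \eqref{eq:renege after deadline} nothing reneges after time $\tau$, and $\xi^\circ_t[0,\tau]=\xi^\circ_t[0,\infty)$, so testing $\iota^\circ$ against $\xi^\circ[0,\tau]$ is the same as the full work-conservation condition.

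The main obstacle I anticipate is the bookkeeping in step (iv): one must be sure that, under the \emph{original} data $\al$, no mass with deadline in $(\tau,\infty)$ contributes to $\xi^i_t[0,x]$ for $x\le\tau$ (true by definition of the truncation of $\xi$) \emph{and} that the service process $\beta^{i,s}$ does not ``borrow'' priority — i.e. that the EDF condition \eqref{eq:hard fluid EDF cond} at level $x\le\tau$ for the original solution forces $\beta^i_t(x,\infty)$ to increase only when $\xi^i_t[0,x]=0$, which is exactly the condition needed for the truncated solution. One must also handle the fact that the truncated candidate $\xi^\circ$ should have $\rho^\circ(t)=\rho(t)$ for $t\le\tau$ and not merely $\rho(t\wedge\tau)$; since $t\le\tau$ these coincide, but one should note that $\rho$ is nondecreasing and that \eqref{eq:rho def} gives $\beta^r_t[0,t]=\beta^r_t[0,\infty)$, so for $t\le\tau$ the reneged mass all has deadline $\le t\le\tau$ and is therefore unaffected by the truncation at $\tau$. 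Once these observations are assembled, the restricted tuple solves the $(\al^\circ,\mu)$ fluid model equations on $[0,\tau]$, and uniqueness from Lemma~\ref{thm:4.10 in atar18} (applied after noting both solutions extend to global solutions and agree on $[0,\tau]$ by a further truncation-stability argument, or by directly observing the equations only couple levels $\le x$ to levels $\le x$) yields $\rho=\rho^\circ$, $\beta^r_t[0,x]=\beta^{\circ,r}_t[0,x]$, $\beta^s_t[0,x]=\beta^{\circ,s}_t[0,x]$, and $\xi_t[0,x]=\xi^\circ_t[0,x]$ on the square, as claimed.
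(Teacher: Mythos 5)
Your overall strategy runs in the opposite direction from the paper's: you restrict the full solution to deadline levels $\le\tau$ and try to recognize it as the solution of the truncated problem, whereas the paper lifts the truncated reneging process into the full problem by forming $\Theta^{(1)}\brac{\alpha,\mu+\rho^\circ}$ and exploiting the non-anticipation of the one-dimensional Skorokhod map together with the fact (Lemma 2.7 of \cite{atar2018}) that $\Gamma^{(1)}_1\brac{\alpha\closedbrac{0,x}-\mu-\rho^\circ}$ depends on $\alpha$ only through $\alpha\closedbrac{0,x}$. Your direction is viable in principle, but as written it has a genuine gap at the idleness process. You propose to ``keep $\iota^\dagger=\iota$'' and assert that the work-conservation condition ``transfers directly.'' It does not: the truncated system idles strictly more than the original one whenever the original server processes mass with deadlines in $\brac{\tau,\infty}$ before time $\tau$, which EDF permits as soon as $\xi_t\closedbrac{0,\tau}=0$ while $\xi_t\brac{\tau,\infty}>0$. (Take $K=1$, $\mu\brac{t}=t$, initial mass $1$ with deadlines in $\closedbrac{10,11}$, $\tau=1$: then $\iota\equiv 0$ but $\iota^\circ\brac{t}=t$.) So the restricted tuple with $\iota^\dagger=\iota$ violates \eqref{108} and is simply not a solution of the truncated model. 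The fix is to set $\iota^\dagger=\mu-\beta^{s}\closedbrac{0,\tau}=\iota+\beta^{s}\brac{\tau,\infty}$ and then verify $\int\xi_t\closedbrac{0,x}\,d\beta^{s}_t\brac{\tau,\infty}=0$ for $x\le\tau$, which follows from the EDF condition \eqref{eq:hard fluid EDF cond} at level $x$ since the increments of $t\mapsto\beta^{s}_t\brac{\tau,\infty}$ are dominated by those of $t\mapsto\beta_t\brac{x,\infty}$. This is precisely the point where the claim that ``the equations only couple levels $\le x$ to levels $\le x$'' breaks down: $\iota$ is defined through $\beta^{s}\left[0,\infty\right)$ and therefore sees all deadline levels.

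A secondary, smaller gap is condition \eqref{103}: for the truncated candidate one has $\sigma^\dagger\brac{t}=\inf\supp\closedbrac{\xi_t\brac{\cdot\cap\closedbrac{0,\tau}}}\ge\sigma\brac{t}$, so $\{\sigma^\dagger\brac{t}>t\}\supseteq\{\sigma\brac{t}>t\}$ and the condition does not transfer for free; you need the observation that for $t<\tau$, $\sigma\brac{t}=t$ forces mass of $\xi_t$ in $\closedbrac{t,t+\eta}\subset\closedbrac{0,\tau}$ for all small $\eta$, hence $\sigma^\dagger\brac{t}=t$, and the single point $t=\tau$ is harmless because $\rho$ is continuous. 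With these two repairs your candidate is a genuine global solution for the data $\brac{\alpha^\circ,\mu}$, and uniqueness from Lemma \ref{thm:4.10 in atar18} then gives the conclusion without the local-in-time uniqueness discussion that the paper's route requires; but in the form you have written it, the key verification fails.
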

\begin{proof}
Recall from the proof of Theorem \ref{thm:MD-MVSM} the notation
$\Gamma=(\Gamma_1,\Gamma_2)$ for the SM in the finite dimensional
orthant. We denote by $\Gamma^{(1)}=(\Gamma_1^{(1)},\Gamma_2^{(1)})$
the special case where the dimension is 1. That is, $\Gamma^{(1)}$
is merely the SM on the half line.
The proof uses crucially the non-anticipation property of this SM
\cite[p.165]{chen2001}, stating that if $\ph_i=\Gamma^{(1)}(\psi_i)$ for
$i=1,2$, and for some $T>0$, $\psi_1=\psi_2$ on $[0,T]$, then also
$\ph_1=\ph_2$ on $[0,T]$.

Next, recall the notation $\Theta$ for the VMVSM.
Once again, in the special case $K=1$ we denote this map by $\Theta^{(1)}$.
In this proof, $t$ is always assumed to be in $\closedbrac{0,\tau}$.
From Lemma \ref{thm:4.10 in atar18},
using equations $\eqref{107}$-\eqref{101} to reach conditions 1-4 in Definition \ref{def:MVSP} with the data $\brac{\alpha,\mu+\rho}$, it follows that
$\brac{\xi^\circ,\beta^\circ,\iota^\circ,\rho^\circ}$ and $\brac{\xi,\beta,\iota,\rho}$ are respectively the unique solutions of the following two problems \eqref{eq:Full hard fluid 0} and \eqref{eq:Full hard fluid}:
%
    \begin{align}\label{eq:Full hard fluid 0}\tag{P0}
        \begin{cases}
            (i) &\brac{\xi^\circ,\beta^\circ,\iota^\circ}=\Theta^{(1)}\brac{\alpha^\circ,\mu+\rho^\circ}, \\
            (ii) &\xi^\circ_t\left[0,t\right)=0,\\
            (iii) &\int\mathbbm{1}_{\{\sigma^\circ\brac{t}>t\}}d\rho^\circ\brac{t}=0,
            \quad\sigma^\circ\brac{t}=\inf\text{supp}\closedbrac{\xi^\circ_t}.
        \end{cases}
    \end{align}
    \begin{align}\label{eq:Full hard fluid}\tag{P1}
        \begin{cases}
             (i) &\brac{\xi,\beta,\iota}=\Theta^{(1)}\brac{\alpha,\mu+\rho}, \\
            (ii) &\xi_t\left[0,t\right)=0,\\
            (iii) &\int\mathbbm{1}_{\{\sigma\brac{t}>t\}}d\rho\brac{t}=0,
            \quad\sigma\brac{t}=\inf\text{supp}\closedbrac{\xi_t}.
        \end{cases}
    \end{align}
    Consider now the data $\brac{\alpha,\mu+\rho^\circ}\in\mathbb{D}_{\cal M}^\uparrow\times\mathbb{D}^\uparrow$, and denote the associated unique solution $\brac{\hat{\xi},\hat{\beta},\hat{\iota}}=\Theta^{(1)}\brac{\alpha,\mu+\rho^\circ}$, and $\hat{\sigma}\brac{t}=\inf\text{supp}\closedbrac{\hat{\xi}_t}$. 
    We will show that the tuple $\brac{\hat{\xi},\hat{\beta},\hat{\iota},\rho^\circ}$
    satisfies $(i)-(iii)$ in \eqref{eq:Full hard fluid}. Uniqueness implies then $\brac{\xi,\beta,\iota,\rho}=\brac{\hat{\xi},\hat{\beta},\hat{\iota},\rho^\circ}$.
    
    $\brac{\hat{\xi},\hat{\beta},\hat{\iota},\rho^\circ}$ satisfies condition (\ref{eq:Full hard fluid}.i) by the definition of $\brac{\hat{\xi},\hat{\beta},\hat{\iota}}$.
    
    To show that $\brac{\hat{\xi},\hat{\beta},\hat{\iota},\rho^\circ}$ satisfies (\ref{eq:Full hard fluid}.ii), use \cite[Lemma 2.7]{atar2018} for $x\leq\tau$:
    \begin{align*}
        \hat{\xi}\closedbrac{0,x}=\Gamma^{(1)}_1\brac{\alpha\closedbrac{0,x}-\mu-\rho^\circ}=\Gamma^{(1)}_1\brac{\alpha^\circ\closedbrac{0,x}-\mu-\rho^\circ}=\xi^\circ\closedbrac{0,x}.
    \end{align*}
    This implies immediately $\hat{\xi}_t\left[0,t\right)=\xi_t^\circ\left[0,t\right)=0$ by (\ref{eq:Full hard fluid 0}.ii).
    
    We need to show now that $\int\mathbb{I}_{\{\hat{\sigma}\brac{t}>t\}}d\rho^\circ\brac{t}=0$. This will follow from (\ref{eq:Full hard fluid 0}.iii) once we show that
    \begin{align*}
        \{t\leq\tau:\quad\sigma^\circ\brac{t}=t\}=\{t\leq\tau:\quad\hat{\sigma}\brac{t}=t\}.
    \end{align*}
    We already showed that for $x$ and $t$ in $\closedbrac{0,\tau}$: $\hat{\xi}_t\closedbrac{0,x}=\xi_t^\circ\closedbrac{0,x}$. Consider any
    $t^\prime\in\{t\leq\tau:\sigma^\circ\brac{t}=t\}$. Then $t^\prime=\inf\text{supp}\closedbrac{\xi^\circ_{t^\prime}}=\inf\text{supp}\closedbrac{\hat{\xi}_{t^\prime}}$. So $t^\prime$ is in $\{t\leq\tau:\hat{\sigma}\brac{t}=t\}$ and therefore
    \begin{align*}
        \{t\leq\tau:\quad\sigma^\circ\brac{t}=t\}\subseteq\{t\leq\tau:\quad\hat{\sigma}\brac{t}=t\}.
    \end{align*}
    The other direction is proved similarly.
    
    To conclude, when we consider only times in the interval $\closedbrac{0,\tau}$, the tuple $\brac{\hat{\xi},\hat{\beta},\hat{\iota},\rho^\circ}$ satisfies all the conditions in \eqref{eq:Full hard fluid}. So, by uniqueness, $\rho\brac{t}=\rho^\circ\brac{t}$ for $t\leq\tau$.
    
    This implies all the other equalities: $\beta_t^r\closedbrac{0,x}=\rho\brac{t\land x}=\rho^\circ\brac{t\land x}=\beta_t^\circ\closedbrac{0,x}$ for $x,t\in\closedbrac{0,\tau}$.
    
    Also, for $x,t\in\closedbrac{0,\tau}$, by the non-anticipation property,
    \begin{align*}
        \xi_t\closedbrac{0,x}=\Gamma^{(1)}_1\brac{\alpha\closedbrac{0,x}-\mu-\rho}\brac{t}=\Gamma^{(1)}_1\brac{\alpha^\circ\closedbrac{0,x}-\mu-\rho^\circ}\brac{t}=\xi_t^\circ\closedbrac{0,x}.
    \end{align*}
Finally, for $x,t\in\closedbrac{0,\tau}$: $\beta_t\closedbrac{0,x}=\alpha_t\closedbrac{0,x}-\xi_t\closedbrac{0,x}=\alpha_t^\circ\closedbrac{0,x}-\xi_t^\circ\closedbrac{0,x}=\beta_t^\circ\closedbrac{0,x}$, and $\beta_t^s\closedbrac{0,x}=\beta_t\closedbrac{0,x}-\beta_t^r\closedbrac{0,x}=\beta_t^\circ\closedbrac{0,x}-\beta_t^{\circ,r}\closedbrac{0,x}=\beta_t^{\circ,s}\closedbrac{0,x}$.
\end{proof}

The next lemma states that the fluid model equations for $K=1$
`preserve' Assumption \ref{ass:data assumptions}.

\begin{lemma}\label{lem:beta satisfies 1}
Let $K=1$ and $P=0$ and
suppose $\brac{\alpha,\mu}$ satisfies Assumption \ref{ass:data assumptions}.
Let $\brac{\xi,\beta,\iota,\rho}$ be the unique solution of the fluid model equations. Let $\gamma_t$
be defined via the relation $\gamma_t\closedbrac{0,x}=\beta_t^s\closedbrac{0,x-\epsilon}$. Then $\gamma$ also satisfies Assumption \ref{ass:data assumptions}.1 with zero initial condition,
i.e. $\gamma\in\mathbb{C}_{{\cal M}_\sim}^\uparrow$
    \begin{align}\label{111}
        \gamma_t(B)=\int_0^tg_s(B)ds,\quad t\geq 0,B\in\calB(\R_+),
    \end{align}
where for each $t$, $g_t$ is a finite measure on $\R_+$ with $g_t[0,t)=0$,
and the mapping $t\mapsto g_t(B)$ is measurable. Moreover,
$\lim_{\del\downarrow0}\sup_{s\in[0,t]}g_s[s,s+\del]=0$.
\end{lemma}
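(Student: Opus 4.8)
The plan is to reduce the statement to properties of the single-node fluid model that are essentially already available, and to push regularity of $\alpha$ through the Skorokhod-map representation. The starting point is the identity $\gamma_t[0,x]=\beta^s_t[0,x-\eps]$ together with the decomposition $\beta^s_t[0,x]=\beta_t[0,x]-\beta^r_t[0,x]=\alpha_t[0,x]-\xi_t[0,x]-\rho(t\wedge x)$ coming from \eqref{107}, \eqref{eq:Hard fluid dynamics} (with $K=1$, $P=0$) and the definition \eqref{eq:rho def}. By Lemma \ref{thm:4.10 in atar18} the solution is continuous and $\xi_t\in\calM_\sim$ for all $t$, and by Lemma \ref{lem:rho nonanticipation} (or rather its proof) we have the representation $\xi_t[0,x]=\Gamma_1^{(1)}(\alpha[0,\cdot\,]-\mu-\rho)(t)$ for $x$ in the relevant range. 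So the first step is to record that $\gamma$ inherits atomlessness and continuity: $\gamma_t$ is atomless because $\beta^s_t$ is (it is the difference of the atomless measures $\alpha_t$ and $\xi_t$ minus an absolutely continuous-in-$x$ term), hence $\gamma\in\mathbb{C}_{\calM_\sim}^\uparrow$.

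The second step is to produce the density representation \eqref{111}. Here I would argue that $t\mapsto\beta^s_t[0,x]$ is Lipschitz in $t$, uniformly in $x$, with a deterministic local rate. Indeed $\beta^s_t[0,\infty)=\mu(t)-\iota(t)=T(t)$, and $T$ is non-decreasing with $T(t)\le\mu(t)=\int_0^t m(s)\,ds$, so $t\mapsto\beta^s_t[0,\infty)$ is absolutely continuous with density bounded by $m$. Since $x\mapsto\beta^s_t[0,x]$ is non-decreasing and $t\mapsto\beta^s_t[0,x]$ is non-decreasing for each fixed $x$, absolute continuity in $t$ of the total mass forces, via a standard monotone-class / Radon–Nikodym argument, a representation $\beta^s_t(B)=\int_0^t b_s(B)\,ds$ with $b_s$ a measure dominated in total mass by $m(s)$. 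Translating by $\eps$ gives \eqref{111} with $g_s(B)=b_s(B-\eps):=b_s(\{y-\eps:y\in B\}\cap\R_+)$, and measurability of $t\mapsto g_t(B)$ follows from measurability of $t\mapsto\beta^s_t(B)$ together with Fubini. The support condition $g_t[0,t)=0$ is immediate from \eqref{eq:renege after deadline}-type reasoning: $\beta^s_t$ charges only deadlines that were present, i.e. in $[0,\infty)$ intersected with the support of $\alpha_t$, but more to the point after the $\eps$-shift any mass in $\gamma_t$ has deadline $\ge$ (its service-completion time) $+\,\eps\ge t$ — this uses the dynamic-deadline structure and should be read off from \eqref{100} together with the fact that service completions occur by time $t$.

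The third and genuinely substantive step is the uniform non-atomicity bound $\lim_{\del\downarrow0}\sup_{s\in[0,t]}g_s[s,s+\del]=0$, equivalently $\lim_{\del\downarrow0}\sup_{s\in[0,t]}b_s[s-\eps,s-\eps+\del]=0$. I expect this to be the main obstacle, because it is a statement about the modulus of the density of $\beta^s$ rather than of $\beta^s$ itself. The approach I would take is a contradiction/localization argument in the time–deadline plane: an atom forming in $b_s$ near deadline level $s-\eps$ would, after integrating, produce a set $[x,x+\del]$ on which $\beta^s_t[0,x+\del]-\beta^s_t[0,x]$ grows at a rate bounded below uniformly over a positive-length time interval; but $\beta^s_t[0,x+\del]-\beta^s_t[0,x]\le\alpha_t[0,x+\del]-\alpha_t[0,x]=\hat\alpha_t(x,x+\del]$ plus a reneging term, and $\hat\alpha$ satisfies the uniform modulus bound in Assumption \ref{ass:data assumptions}.1 while reneging only removes mass with deadline exactly $t$, which is a single level. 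More precisely I would use the representation $\beta^s_t[0,x]=\alpha_t[0,x]-\xi_t[0,x]-\rho(t\wedge x)$, the Lipschitz continuity \eqref{eq:Lipschitz of Gamma} of $\Gamma_1^{(1)}$ to control $\xi_t(x,x+\del]=\Gamma_1^{(1)}(\alpha[0,x+\del]-\cdots)(t)-\Gamma_1^{(1)}(\alpha[0,x]-\cdots)(t)$ by $\sup_s\hat\alpha_s(x,x+\del]$, and the fact that $\rho$ is non-decreasing and continuous, to conclude $b_s(x,x+\del]\le c\,\limsup_{h\downarrow0}h^{-1}\hat\alpha$-increment, uniformly; feeding in $\lim_{\del\downarrow0}\sup_{s\le t}a^i_s[s,s+\del]=0$ closes the estimate. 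The only delicate point is interchanging the $x$-difference with the limit defining the density, which I would handle by working with the finite differences $\del^{-1}(\beta^s_t[0,x+\del]-\beta^s_t[0,x])$ uniformly in $t$ before passing to the density, exploiting that all the bounds above are uniform in $t\in[0,\tau]$.
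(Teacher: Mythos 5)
Your first two steps track the paper's proof closely: the paper also derives atomlessness of $\beta^s_t$ (hence of $\gamma_t$) from the domination $\beta^s_t(B)\le\al_t(B)$ together with $\al_t\in\calM_\sim$, derives continuity in $t$ from $\beta^s_t(\R_+)-\beta^s_\tau(\R_+)\le\int_\tau^t m(s)\,ds$, and obtains the density representation \eqref{111} by disintegrating the measure $\la$ on $\R_+^2$ induced by $\gamma$ against its time-marginal, which is dominated by $m(t)\,dt$. Your ``monotone-class / Radon--Nikodym'' route to the kernel $b_s$ is essentially the same disintegration argument.

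The gap is in your third step, and it is a conceptual one: you treat the condition $\lim_{\del\downarrow0}\sup_{s\in[0,t]}g_s[s,s+\del]=0$ as a substantive quantitative estimate on the density of $\beta^s$, to be extracted from the Lipschitz property of $\Gamma_1^{(1)}$ and the modulus of $\hat\alpha$ --- and you only sketch that argument rather than carry it out (controlling a time-density pointwise and uniformly in $s$ from increments of the integrated process is exactly the kind of interchange you flag as ``delicate,'' and it is not resolved in your write-up). But no such estimate is needed. The support computation you already invoke for $g_t[0,t)=0$ in fact gives strictly more: since $\xi_t[0,x]=0$ for $x\le t$, arrivals carry deadlines $\ge$ their arrival time, and reneging of deadline-$x$ mass stops at time $x$, one gets $\beta^s_{t_0}[0,x]=\beta^s_{t}[0,x]$ for all $x\le t\le t_0$; hence the measure $\la$ assigns zero mass to every rectangle $[0,t+\eps]\times[t,t_0]$, and therefore to the whole region $\{(x,t):x<t+\eps\}$. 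After disintegration this yields $\hat g_t[0,t+\eps)=0$ for a.e.\ $t$, and upon redefining $g_t(B)=\hat g_t(B\cap[t+\eps,\iy))$ one has $g_s[s,s+\del]=0$ identically for every $\del<\eps$, so the $\sup$ is exactly zero for all small $\del$. This is precisely the simplification that the dynamic $\eps$-postponement of deadlines is designed to buy; your proposal misses it and consequently replaces a trivial observation with an incomplete and unnecessarily hard analytic argument.
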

\begin{proof}
The proof is based mainly on the following two facts
\begin{equation}\label{105}
\beta^s_t(B)\le\al_t(B),\qquad B\in\calB(\R),
\end{equation}
\begin{equation}\label{106}
\beta^s_t(\R_+)-\beta^s_\tau(\R_+)\le\int_\tau^tm(s)ds,
\qquad 0\le\tau<t,
\end{equation}
and uses disintegration.

By the fluid model equations for $K=1$, using \eqref{eq:Hard fluid dynamics} (recalling
$P=0$) and \eqref{107},
for a Borel set $B$ we have $\alpha_t\brac{B}=\xi_t\brac{B}+\beta_t\brac{B}$
as well as $\beta_t(B)=\beta^s_t(B)+\beta^r_t(B)$, proving \eqref{105}.
Next, by \eqref{108}, the monotonicity of $\iota$ and Assumption \ref{ass:data assumptions}.2,
\eqref{106} holds.

    We first show that $\gamma$
belongs to $\mathbb{C}_{{\cal M}_\sim}^\uparrow$. From the axioms of our model $\beta^s\in\mathbb{D}_{{\cal M}}^\uparrow$. By \eqref{105} and the assumption
$\al_t\in\calM_\sim$ it follows that $\beta^s_t\in\calM_\sim$ for every $t$.
The continuity $t\mapsto\beta^s_t$ follows from \eqref{106}. This shows that $\beta^s$,
and in turn, $\gamma$, belongs to $\mathbb{C}_{{\cal M}_\sim}^\uparrow$.

    Consider now the space $\mathbb{R}_+^2$ with its Borel $\sigma$-algebra.
Let $\la$  be the measure on this space determined by $\gamma$ via the relation
\[
\lambda\brac{\closedbrac{x,y}\times\closedbrac{s,t}}=\gamma_{t}\closedbrac{x,y}-\gamma_s\closedbrac{x,y},
\qquad x<y,\ s<t.
\]
By \eqref{106}, the measure $\la(\R_+\times dt)$ is dominated by the measure
$m(t)dt$.
Let $\mathbb{T}:\brac{\mathbb{R}_+^2,{\cal B}(\R_+^2)}\to\brac{\R_+,{\cal B}\brac{\R_+}}$
be defined by $\mathbb{T}(x,t)=t$.
We use the disintegration theorem \cite[Theorem 1]{Chang1997},
with $\mathbb{T}$ as the measurable map.
According to this theorem there exists a family of finite measures $\tilde g_t$ on $\R_+$
such that $t\mapsto\tilde g_t(B)$ is measurable for every $B\in\calB(\R_+)$, and
for each nonnegative measurable $f$,
\begin{equation}\label{110}
\int f(x,t)\la(dx,dt)=\int f(x,t)\tilde g_t(dx)m(t)dt.
\end{equation}
Denote $\hat g_t(B)=\tilde g_t(B)m(t)$.
It will be shown that
\begin{equation}\label{109}
\hat g_t[0,t+\eps)=0 \quad \text{ for a.e. } t.
\end{equation}
Once the above is established, we can set
$g_t(B)=\hat g_t(B\cap[t+\eps,\iy))$, $B\in\calB(\R_+)$,
by which we achieve for a.e.\ $t$, $g_t(B)=\hat g_t(B)$.
Hence in view of \eqref{110} one has \eqref{111}.
As for the two final assertions made in the lemma,
we certainly have $g_t[0,t)\le g_t[0,t+\eps)=0$,
and for all small $\del$, $\sup_sg_s[s,s+\del]=0$,
by which these assertions are true.

We thus turn to showing \eqref{109}.
Fix $x\leq t\leq t_0$. From the hard EDF equations
and the assumption that jobs arrive before their deadlines:
    \begin{align}
        &0=\xi_t\closedbrac{0,x}=\alpha_t\closedbrac{0,x}-\beta_t\closedbrac{0,x},\nonumber\\
        &\alpha_{t_0}\closedbrac{0,x}-\alpha_{t}\closedbrac{0,x}=0,\nonumber\\
        &\beta_{t_0}^r\closedbrac{0,x}-\beta_{t}^r\closedbrac{0,x}=\rho\brac{x\land t_0}-\rho\brac{x\land t}=\rho\brac{x}-\rho\brac{x}=0,\label{eq:beta with x<t<t0}
    \end{align}
    \begin{align*}
        \Rightarrow\beta_{t_0}^s\closedbrac{0,x}-\beta_t^s\closedbrac{0,x}=\alpha_{t_0}\closedbrac{0,x}-\alpha_{t}\closedbrac{0,x}-\beta_{t_0}^r\closedbrac{0,x}+\beta_{t}^r\closedbrac{0,x}=0.
    \end{align*}
    This, together with \eqref{100} that expresses the fact that
deadlines are postponed by $\eps$ after service, implies that every rectangular subset $\closedbrac{0,t+\eps}\times\closedbrac{t,t_0}$ of the set $\{\brac{x,t}\in\mathbb{R}_+^2:x<t+\eps\}$ satisfies:
    \begin{align}\label{eq:hard fluid beta zero}
        \lambda\brac{\closedbrac{0,t+\eps}\times\closedbrac{t,t_0}}&=\gamma_{t_0}\closedbrac{0,t+\eps}-\gamma_t\closedbrac{0,t+\eps}\\
        &=\beta_{t_0}^s\closedbrac{0,t}-\beta_t^s\closedbrac{0,t}\nonumber\\
        &=0\nonumber.
    \end{align}
    And, since $\{\brac{x,t}\in\mathbb{R}_+^2:x<t+\epsilon\}$ is contained
    in a countable union of rectangles of this form,
    
    \begin{align*}
        0=\lambda\brac{\{\brac{x,t}\in\mathbb{R}_+^2:x<t+\eps\}}=\int_0^\infty\hat{g}_s\left[0,s+\eps\right)ds,
    \end{align*}
       and  \eqref{109} follows.
\end{proof}
\begin{remark}
Note that for any finite collection $\{\gamma^i\}_{i=1}^K$ and a set of positive coefficients $\{P_i\}_{i=1}^K$, if each element of the collection satisfies Assumption \ref{ass:data assumptions}, then does also $\sum_{i=1}^KP_i\gamma^i$. Lemma \ref{lem:beta satisfies 1} considers a single server, but we will use the conclusion on the sum when we will show that the total arrival  process, the sum of exogenous and the endogenous arrival processes, satisfies the assumptions.  
\end{remark}

\subsection{Proof of main results}\label{sec33}

This section opens with the proof of Theorem \ref{thm:uniqueness hard}
regarding existence and uniqueness of solutions to the fluid model equations.

\begin{proof}[Proof of Theorem \ref{thm:uniqueness hard}]
    Denote
    \begin{align*}
        &\Xi=\brac{\xi,\beta,\beta^s,\beta^r,\gamma,\iota,\rho},\\
        &\mathbb{X}=\mathbb{D}_{{\cal M}}^K\times\mathbb{D}_{{\cal M}}^{\uparrow K}\times\MVDupK\times\MVDupK\times\MVDupK\times\mathbb{D}^{\uparrow K}\times\mathbb{D}^{\uparrow K},
    \end{align*}
    \begin{align*}
        \calig{X}=\left\{
        \Xi\in\mathbb{X}:\quad\text{The components of $\Xi$ satisfy \eqref{100}-\eqref{eq:renege after deadline},\eqref{eq:hard fluid work cons}-\eqref{103}}
        \right\}.
    \end{align*}
Recall that the fluid model equations are \eqref{100}-\eqref{103}.
Equation \eqref{eq:Hard fluid dynamics}, the only one excluded from the
definition of $\calX$, is the equation that couples between the different servers.
Indeed, the remaining equations are fluid model equations for
$K$ separate single server systems. 
This exclusion makes it convenient
to use the single server results in our proof.
The existence proof, provided first, will be complete
once we construct a tuple $\Xi\in\calX$ that satisfies \eqref{eq:Hard fluid dynamics}.
It is then shown that this tuple is unique. Both existence and uniqueness are proved by induction.
    
    First consider, for each $i$, the fluid model solution of a single server with primitives $\brac{\alpha^i\brac{\cdot\cap\left[0,\eps\right)},\mu^i}$, denoted  $\brac{\xi^{i,(1)},\beta^{i,(1)},\iota^{i,(1)},\rho^{i,(1)}}$. 
    Denote also $\gamma_t^{i,(1)}\closedbrac{0,x}=\beta_t^{is,(1)}\closedbrac{0,x-\eps}$.
    Then, for $n>1$, denote the fluid model solution of a single server with primitives $\brac{\alpha^i\brac{\cdot\cap\left[0,n\eps\right)}+\sum_{j=1}^KP_{ji}\gamma^{j,(n-1)}\brac{\cdot\cap\lt[0,n\eps\rt)},\mu^i}$ by $\brac{\xi^{i,(n)},\beta^{i,(n)},\iota^{i,(n)},\rho^{i,(n)}}$ and $\gamma_t^{i,(n)}\closedbrac{0,x}=\beta_t^{i,(n)}\closedbrac{0,x-\eps}$. This inductively defines the
tuples $\Xi^{(n)}$.
Note that these tuples belong to $\calX$ as solutions to the fluid model of hard EDF servers. 
    Note also that all $\beta^{is,(n)}$ satisfy Assumption \ref{ass:data assumptions}.
    We now show that these tuples are consistent with each other in that for $\brac{x,t}\in\lt[0,n\eps\rt)^2$ and $m>n$, we have $\beta_t^{is,(n)}\closedbrac{0,x}=\beta_t^{is,(m)}\closedbrac{0,x}$, $\rho^{i,(n)}\brac{t}=\rho^{i,(m)}\brac{t}$ and $\xi_t^{i,(n)}\closedbrac{0,x}=\xi_t^{i,(m)}\closedbrac{0,x}$.

It is proved by induction over $n$ that these identities hold for all $m>n$.
    If $\brac{x,t}\in\lt[0,\eps\rt)^2$, then $\gamma_t^{j,(m)}\brac{\closedbrac{0,x}\cap\lt[0,m\eps\rt)}=\gamma_t^{j,(m)}\closedbrac{0,x}=0$ and $\alpha_t^{i}\brac{\closedbrac{0,x}\cap\lt[0,m\eps\rt)}=\alpha_t^{i}\brac{\closedbrac{0,x}\cap\lt[0,\eps\rt)}$, so the data generating $\rho^{(1)}$, $\rho^{(m)}$, $\xi^{(1)}$, $\xi^{(m)}$, $\beta^{s,(1)}$ and $\beta^{s,(m)}$ coincide on $\lt[0,\eps\rt)^2$ and Lemma \ref{lem:rho nonanticipation} yields the desired conclusion for $n=1$.
    Assuming the claim is true for $n$, consider $m\geq n+1$ and $\brac{x,t}\in\lt[0,\brac{n+1}\eps\rt)^2$. Then
    \begin{align*}
        \alpha^i_t\brac{\closedbrac{0,x}\cap\lt[0,\brac{n+1}\eps\rt)}=\alpha^i_t\brac{\closedbrac{0,x}\cap\lt[0,m\eps\rt)}=\alpha^i_t\closedbrac{0,x}.
    \end{align*}
    If $t<n\eps$, then the induction assumption implies 
    \begin{align*}
        \gamma_t^{j,(n)}\brac{\closedbrac{0,x}\cap\lt[0,\brac{n+1}\eps\rt)}=\beta_t^{js,(n)}\closedbrac{0,x-\eps}=\beta_t^{js,(m)}\closedbrac{0,x-\eps}=\gamma_t^{j,(m)}\brac{\closedbrac{0,x}\cap\lt[0,\brac{m+1}\eps\rt)}.
    \end{align*}
    If $t\in\left[n\eps,\brac{n+1}\eps\right)$, 
    by \eqref{eq:beta with x<t<t0}, we have that whenever  $t>x$: $\beta_t^{i,s}\closedbrac{0,x}=\beta_x^{i,s}\closedbrac{0,x}$. 
    It follows that if $t\in\left[n\epsilon,\brac{n+1}\epsilon\right)$,
    \begin{multline*}
        \gamma_t^{j,(n)}\brac{\closedbrac{0,x}\cap\lt[0,\brac{n+1}\eps\rt)}=\beta_t^{js,(n)}\closedbrac{0,x-\epsilon}=\beta_{\closedbrac{x-\epsilon}^+}^{js,(n)}\closedbrac{0,x-\epsilon}\\
        =\beta_{\closedbrac{x-\epsilon}^+}^{js,(m)}\closedbrac{0,x-\epsilon}=\gamma_t^{j,(m)}\closedbrac{0,x}=\gamma_t^{j,(m)}\brac{\closedbrac{0,x}\cap\lt[0,\brac{m+1}\eps\rt)}.
    \end{multline*}
    So, the data generating $\rho^{(n)}$, $\rho^{(m)}$, $\xi^{(n)}$, $\xi^{(m)}$, $\beta^{s,(n)}$ and $\beta^{s,(m)}$ coincide on $\lt[0,\brac{n+1}\eps\rt)^2$ and Lemma \ref{lem:rho nonanticipation} yields the desired conclusion for $n+1$.
    
The above shows that, for example, $\beta_t^{i,(n)}\closedbrac{0,x}$
becomes constant as $n$ increases. Hence we may extract from the
sequence a tuple in the following way.
Given $x$ and $t$, let $n>\brac{x\lor t}/\eps$ and let $\beta_t^i\closedbrac{0,x}=\beta_t^{i,(n)}\closedbrac{0,x}$, and $\xi_t^i\closedbrac{0,x}=\xi_t^{i,(n)}\closedbrac{0,x}$. 
For every fixed $t$, this uniquely defines measures by determining their
values on all sets $[0,x]$, a collection generating the $\sig$-algebra
$\calB(\R_+)$. 
Similarly, for each $t$, $\rho^{(n)}_t$ becomes constant as $n$ gets large,
hence we let $\rho=\lim_{n}\rho^{(n)}$.
We also define $\beta_t^{i,r}\closedbrac{0,x}=\rho^i\brac{x\land t}$, $\beta^{i,s}=\beta^i-\beta^{i,r}$, $\sigma^{i}\brac{t}=\inf\supp\closedbrac{\xi_t^i}$, $\iota^i=\mu^i-\beta^{i,s}\lt[0,\infty\rt)$ and $\gamma_t^i\closedbrac{0,x}=\beta_t^{i,s}\closedbrac{0,x-\eps}$.

    The tuple $\brac{\xi,\beta,\iota,\rho}$ thus constructed is our candidate, and we now show that it is indeed a solution to the fluid model equations.

First, by construction,
$\Xi$ satisfies \eqref{eq:Hard fluid dynamics}. 
\eqref{100}, \eqref{107}, \eqref{108}, \eqref{eq:rho def} and $\eqref{101}$ hold by definition, and \eqref{eq:hard fluid hardness} is immediate by construction.
\eqref{eq:renege after deadline} is also immediate by $\beta_t^{i,r}\brac{t,\infty}=\rho^i\brac{t}-\rho^i\brac{t\land t}=0$.
Equations \eqref{eq:hard fluid work cons} and \eqref{eq:hard fluid EDF cond} are satisfied because, as will soon be shown, $\psi_x^{i,(n)}:=\beta^{is,(n)}\brac{x,\infty}+\iota^{i,(n)}-\beta^{is}\brac{x,\infty}-\iota^i\in\Dup$ and $\xi_t^{i,(n)}\closedbrac{0,x}=\xi_t^i\closedbrac{0,x}$ for large enough $n$.
Recall also that $\Xi^{(n)}\in\calX$ and \eqref{eq:hard prelimit work cons} and \eqref{eq:hard prelimit EDF cond} .
Therefore,
\begin{align}
    &\int\xi_s^i\closedbrac{0,x}d\beta_s^{i,s}\brac{x,\infty}+\int\xi_s^i\closedbrac{0,x}d\iota^i\brac{s}\nonumber\\
    &=\lim_{n\to\infty}\abs{\int\xi_s^{i,(n)}\closedbrac{0,x}d\beta_s^{is,(n)}\brac{x,\infty}+\int\xi_s^{i,(n)}\closedbrac{0,x}d\iota^{i,(n)}\brac{s}-\int\xi_s^{i}\closedbrac{0,x}d\beta_s^{is}\brac{x,\infty}-\int\xi_s^i\closedbrac{0,x}d\iota^i\brac{s}}\nonumber\\
    &=\lim_{n\to\infty}\abs{\int\xi_s^i\closedbrac{0,x}d\psi_x^{i,(n)}\brac{s}}\nonumber\\
    &\leq\norm{\xi^i\closedbrac{0,x}}_T\lim_{n\to\infty}\brac{\beta_T^{is,(n)}\brac{x,\infty}+\iota^{i,(n)}\brac{T}-\beta_T^{is}\brac{x,\infty}-\iota^i\brac{T}}=0.\label{eq:lim of beta R+}
\end{align}
For the equality in \eqref{eq:lim of beta R+}, note that $\beta_T^{i,s}\lt[0,\infty\rt)=\lim_m\lim_n\beta_T^{is,(n)}\closedbrac{0,m}$ while $\beta_T^{is,(n)}\closedbrac{0,m}$ is monotone in both $n$ and $m$, hence interchanging the limits is justified.
Thus we have $\beta_T^{is,(n)}\lt[0,\infty\rt)\to \beta_T^{i,s}\lt[0,\infty\rt)$ and $\iota^{i,(n)}\brac{T}\to\iota^i\brac{T}$ and \eqref{eq:lim of beta R+}.

Lemma 2.2 in \cite{atar2018} implies $\psi_x^{i,(n)}\in\Dup$ and the monotonicity in $n$ of $\beta_t^{is,(n)}\closedbrac{0,x}$, provided one shows that for all $n$
    \begin{align*}
        \alpha^i\lt[x\land n\eps,x\land \brac{n+1}\eps\rt)+\sum_{j=1}^KP_{ji}\brac{\gamma^{j,(n)}\lt[0,x\land \brac{n+1}\eps\rt)-\gamma^{j,(n-1)}\lt[0,x\land n\eps\rt)}\in\Dup.
    \end{align*}
The first term is in $\Dup$ by assumption, and the second term is now shown to belong to $\Dup$ by induction, where each step invokes Lemma 2.2 from \cite{atar2018}.
    Indeed, the lemma yields
    \begin{align*}
        \beta^{is,(2)}\lt[0,x\land 2\eps\rt)-\beta^{is,(1)}\lt[0,x\land \eps\rt)=\mu^i-\iota^{i,(2)}-\beta^{is,(2)}\lt[x\land 2\eps,\infty\rt)-\mu^i+\iota^{i,(1)}+\beta^{is,(1)}\lt[x\land \eps,\infty\rt)\in\Dup
    \end{align*}
    because $\alpha^{i}\lt[x\land\eps,x\land 2\eps\rt)+\sum_{j=1}^KP_{ji}\gamma^{j,(1)}\lt[0,x\land 2\eps\rt)\in\Dup$. 
    Also, 
    \begin{multline*}
        \beta^{is,(n)}\lt[0,x\land n\eps\rt)-\beta^{is,(n-1)}\lt[0,x\land\brac{n-1}\eps\rt)\\
        =\mu^i-\iota^{i,(n)}-\beta^{is,(n)}\lt[x\land n\eps,\infty\rt)-\mu^i+\iota^{i,(n-1)}+\beta^{is,(n-1)}\lt[x\land\brac{n-1}\eps,\infty\rt)\in\Dup
    \end{multline*}
    if $\alpha^{i}\lt[x\land\brac{n-1}\eps,x\land n\eps\rt)+\sum_{j=1}^KP_{ji}\brac{\gamma^{j,(n-1)}\lt[0,x\land n\eps\rt)-\gamma^{j,(n-2)}\lt[0,x\land\brac{n-1}\eps\rt)}\in\Dup$.

    For \eqref{103}, note that 
    \begin{align*}
        \{t:\sigma^{i}\brac{t}>t\}\subset\bigcup_{n\geq m}\{t:\sigma^{i,(n)}>t\}\quad \forall m.
    \end{align*}
    This is due to the fact that if $\xi_t^i\closedbrac{0,x}>0$ then also $\xi_t^{i,(n)}\closedbrac{0,x}>0$ for all $n>\brac{x\lor t}/\eps$.
Consider $t\in[0,T]$ for $T$ fixed.
Recall that $\rho^i_t=\rho^{i,(n)}_t$ for all $t\in[0,T]$, $n>T/\eps$.
Using the above display and the union bound, we obtain for all large $n$,
    \begin{align*}
        \int_{[0,T]}\indicator{\sigma^i\brac{s}>s}d\rho^i\brac{s}\leq\sum_{n>T/\eps}\int\indicator{\sigma^{i,(n)}\brac{s}>s}d\rho^i\brac{s}=\sum_{n>T/\eps}\int\indicator{\sigma^{i,(n)}\brac{s}>s}d\rho^{i,(n)}\brac{s}=0.
    \end{align*}
This shows $\Xi\in\calX$ and completes
the existence proof.    
    
    The argument for uniqueness is as follows.
    We show that for every $x,t\in\mathbb{R_+}$, the quantities $\xi_t\closedbrac{0,x}$, $\beta_t\closedbrac{0,x}$ $,\rho\brac{t}$ and $\iota\brac{t}$ are uniquely determined by the primitives $\brac{\alpha,\mu}\in\MVDupK\times\DupK$. This we do by arguing that the claim
holds for every $\brac{x,t}\in\left[0,n\eps\right)^2$, by induction in $n$.
%
    
    First, directly from \eqref{eq:Hard fluid dynamics}, for $x\in\left[0,\eps\right)$, for the $i$-th server:
    \begin{align*}
        \xi_t^i\closedbrac{0,x}&=\alpha_t^i\closedbrac{0,x}-\beta_t^i\closedbrac{0,x}.
    \end{align*}
    In addition, $\brac{\xi^i,\beta^i,\iota^i,\rho^i}\in\calig{X}$.
    By Lemma \ref{lem:rho nonanticipation}, $\xi_t^i\closedbrac{0,x}$, $\beta_t^{is}\closedbrac{0,x}$ and $\rho^i\brac{t}$ coincide with the unique solution to the fluid model equations of a single server with primitives $\brac{\alpha\brac{\cdot\cap\left[0,\eps\right)},\mu}$ on $\left[0,\eps\right)^2$. In particular, for $t,x\in\left[0,\eps\right)$, $\beta_t^{is}\closedbrac{0,x}$ can be written as required in Assumption \ref{ass:data assumptions}.1. 
    
Next, assume that the uniqueness statement holds
for $(x,t)\in\left[0,n\eps\right)^2$.
Let $\brac{\xi^i,\beta^i,\iota^i,\rho^i}$ denote 
the unique tuple satisfying for all $1\leq i\leq K$ and $(x,t)
\in\left[0,n\eps\right)^2$,
    \begin{align*}
        &\xi_t^i\closedbrac{0,x}=\alpha_t^i\closedbrac{0,x}+\sum_{j=1}^KP_{ji}\beta_t^{j,s}\closedbrac{0,x-\eps}-\beta_t^i\closedbrac{0,x},\\
        &\brac{\xi^i,\beta^i,\iota^i,\rho^i}\in\calig{X}.
    \end{align*}
    Assume in addition that for those $x$ and $t$, $\beta_t^{is}\closedbrac{0,x}$ can be written as required in Assumption \ref{ass:data assumptions}.1.
    
    Consider now $(x,t)\in\left[0,\brac{n+1}\eps\right)^2$. First, directly from Equation \eqref{eq:Hard fluid dynamics}, for the $i$-th server:
    \begin{align*}
        &\xi_t^i\closedbrac{0,x}=\alpha_t^i\closedbrac{0,x}+\sum_{j=1}^KP_{ji}\beta_t^{j,s}\closedbrac{0,x-\eps}-\beta_t^i\closedbrac{0,x},\\
        &\brac{\xi^i,\beta^i,\iota^i,\rho^i}\in\calig{X}.
    \end{align*}
    If $x$ is in $\left[0,\brac{n+1}\eps\right)$ then $x-\eps<n\eps$. 
    Therefore, $\beta_t^{i,s}\closedbrac{0,x-\eps}$ is uniquely determined
    by $(\al,\mu)$ for $t\in\left[0,n\eps\right)$ by our induction assumption. 
    Now for $t\in\left[n\eps,\brac{n+1}\eps\right)$, 
by \eqref{eq:beta with x<t<t0}, we have that whenever  $t>x$: $\beta_t^{i,s}\closedbrac{0,x}=\beta_x^{i,s}\closedbrac{0,x}$. 
It follows that if $t\in\left[n\epsilon,\brac{n+1}\epsilon\right)$ then for $1\leq j\leq K$: $\beta_t^{j,s}\closedbrac{0,x-\epsilon}=\beta_{\closedbrac{x-\epsilon}^+}^{j,s}\closedbrac{0,x-\epsilon}$, which is uniquely determined, in view of the induction assumption. 
    
    By Lemma \ref{lem:rho nonanticipation}, $\brac{\xi^i,\beta^i,\rho^i}$ coincides
     on $\left[0,\brac{n+1}\epsilon\right)^2$ with the
    unique solution of the fluid model equations for a single server with primitives
$\brac{\alpha^i\brac{\cdot\cap\left[0,\brac{n+1}\eps\right)}+\sum_{j=1}^KP_{ji}\gamma^j\brac{\cdot\cap\left[0,\brac{n+1}\eps\right)},\mu}$. In particular, $\beta_t^{is}\closedbrac{0,x}$ can be written as required in Assumption \ref{ass:data assumptions}.1 for $x$ and $t$ in $\left[0,\brac{n+1}\eps\right)$ as well. 
    
    Finally, $\iota$ is uniquely determined for all $t$ via $\iota=\mu+\rho-\beta^s\left[0,\infty\right)$.
\end{proof}
Our final goal is to prove convergence to the fluid model equations.
We first show that the sequence $\{\bar{\rho}^N\}$ is tight,
and deduce from that tightness of the entire tuple appearing on
the LHS of \eqref{112}. Later we complete the proof by
showing that any subsequential limit satisfies the fluid model equations.


\begin{lemma}\label{lem:rho is tight}
    The sequence $\{\bar{\rho}^N\}$ is $C$-tight. Consequently, the tuple $\brac{\bar{\xi}^N,\bar{\beta}^{s,N},\bar{\beta}^{r,N},\bar{\beta}^N,\bar{\gamma}^N,\bar{\iota}^N}$ is C-tight.
\end{lemma}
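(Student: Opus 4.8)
\emph{Strategy.} It suffices to prove $C$-tightness of each scalar sequence $\{\bar\rho^{i,N}\}_N$. Each $\bar\rho^{i,N}$ is nonnegative, nondecreasing, vanishes at $0$, and has jumps of size $1/N$, so by the standard criterion for monotone processes its $C$-tightness reduces to: (a) stochastic boundedness of $\bar\rho^{i,N}(T)$ for every $T$, and (b) the asymptotic equicontinuity estimate $\lim_{\delta\downarrow0}\limsup_N\PP(\sup_{s\le T}(\bar\rho^{i,N}(s+\delta)-\bar\rho^{i,N}(s))>\eta)=0$ for all $\eta,T>0$. Once (a)--(b) are in hand, $C$-tightness of the remaining processes follows by expressing them as Lipschitz (Skorokhod) images of $C$-tight data, via the $\eps$-square induction already used for Theorem~\ref{thm:uniqueness hard}.

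\emph{Boundedness.} Since $\bar\xi^{i,N}_t[0,\iy)\ge0$, the balance relation \eqref{eq:hard Jackson dynamics}, together with $\beta^{i,N}=\beta^{i,s,N}+\beta^{i,r,N}$ and $\rho^{i,N}(t)=\beta^{i,r,N}_t[0,\iy)$, gives $\bar\rho^{i,N}(t)\le\bar\alpha^{i,N}_t[0,\iy)+\sum_j\bar\gamma^{ji,N}_t[0,\iy)$. By \eqref{eq:hard prelimit gamma beta}, $\bar\gamma^{j,N}_t[0,\iy)=\bar D^{j,N}(t)=N^{-1}S^j(N\bar T^{j,N}(t))-N^{-1}$, where $\bar T^{j,N}$ is nondecreasing with increments dominated by those of $\bar\mu^{j,N}$, hence $C$-tight (as $\bar\mu^N\To\mu$ with $\mu$ continuous); composing with the renewal FLLN ($N^{-1}S^j(N\cdot)\to\mathrm{id}$ uniformly on compacts) shows $\bar D^{j,N}$, and therefore $\bar\rho^{i,N}(T)$, is stochastically bounded.

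\emph{Equicontinuity --- the main obstacle.} By the hardness property \eqref{eq:hard prelimit hardness}, a job present in queue $i$ at any time has deadline strictly larger than that time, so a job reneging in $(s,s+\delta]$ has deadline in $(s,s+\delta]$; since arrivals (exogenous by Assumption~\ref{ass:data assumptions}.1, endogenous because deadlines are advanced by $\eps$ on migration) occur no later than their deadline, such a job has been routed into queue $i$ by time $s+\delta$. Hence $\bar\rho^{i,N}(s+\delta)-\bar\rho^{i,N}(s)\le\bar\alpha^{i,N}_{s+\delta}(s,s+\delta]+\sum_j\bar\gamma^{ji,N}_{s+\delta}(s,s+\delta]$. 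The exogenous term is uniformly small: arguing as for the first term in the proof of Theorem~\ref{thm: soft convergence} via \eqref{eq:d dominations}, $\bar\alpha^N\to\alpha$ uniformly in $x$ on $[0,T]$, and $\alpha^i_{T+1}\in\calM_\sim$ has a uniformly continuous distribution function, so $\limsup_N\sup_{s\le T}\bar\alpha^{i,N}_{s+\delta}(s,s+\delta]\to0$ as $\delta\downarrow0$. For the endogenous term one recurses: by Lemma~\ref{lem:martingale} and \eqref{eq:hard prelimit gamma beta}, $\bar\gamma^{ji,N}_{s+\delta}(s,s+\delta]=P_{ji}\bar\beta^{j,s,N}_{s+\delta}(s-\eps,s+\delta-\eps]+(\text{error})+O(1/N)$, and since (by EDF and the service of jobs with deadline $\le y$ ceasing after time $y$, cf.~\eqref{eq:hard prelimit hardness}) such jobs have left queue $j$ by time $s+\delta-\eps$ and are served only after arriving, $\bar\beta^{j,s,N}_{s+\delta}(s-\eps,s+\delta-\eps]\le\bar\alpha^{j,N}_{s+\delta-\eps}(s-\eps,s+\delta-\eps]+\sum_k\bar\gamma^{kj,N}_{s+\delta-\eps}(s-\eps,s+\delta-\eps]$, a sum of an exogenous term and terms of the same shape with the deadline window and the evaluation time shifted back by $\eps$. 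After at most $M:=\lceil T/\eps\rceil+1$ iterations the window lies in $(-\iy,0]$ and is empty; the routing coefficient accumulated after $m$ steps is controlled by $\|P^m\|$, and $\sum_m\|P^m\|<\iy$ because $P$ is convergent. Collecting terms, $\sup_{s\le T}(\bar\rho^{i,N}(s+\delta)-\bar\rho^{i,N}(s))$ is bounded by a finite sum of suprema of the form $\sup_{u\le T}\bar\alpha^{k,N}_u(a,a+\delta]$, plus error terms that vanish in probability by Lemma~\ref{lem:martingale} and an $O(MK/N)$ term; letting $N\to\iy$ and then $\delta\downarrow0$ yields (b). This is the ``induction over $\eps$-squares in the time--deadline plane'' alluded to in the introduction, and is where the dynamic-deadline assumption is essential.

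\emph{$C$-tightness of the tuple.} With $\{\bar\rho^N\}$ $C$-tight, $\bar\beta^{r,N}_t[0,x]=\bar\rho^N(t\wedge x)$ is immediately $C$-tight in $\MVDK$, and each $\bar\gamma^{j,N}_\cdot[0,\iy)=\bar D^{j,N}$ is $C$-tight as in the boundedness step. Running the $\eps$-square induction of the proof of Theorem~\ref{thm:uniqueness hard} at the prelimit level: on $[0,\eps)$ there are no endogenous arrivals, and given $C$-tightness of all $\bar\beta^{j,s,N}_\cdot[0,x]$ on $[0,n\eps)$, the endogenous inputs on $[0,(n+1)\eps)$ are determined from them, so that on $[0,(n+1)\eps)$ each of $\bar\xi^{i,N}_\cdot[0,x]$, $\bar\beta^{i,N}_\cdot[0,x]$, $\bar\beta^{i,s,N}_\cdot[0,x]$ and $\bar\iota^{i,N}$ is the image under the Lipschitz map $\Gamma^{(1)}$ (equivalently $\Theta^{(1)}$) of data assembled from $\bar\alpha^N$, the lower-level $\bar\beta^{j,s,N}$, $\bar\mu^N$, $\bar\rho^N$, the busyness terms and the error processes $\bar e^N,\bar E^N$ --- all $C$-tight, the errors tending to $0$ by Lemma~\ref{lem:martingale}. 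By the Lipschitz bound \eqref{eq:Lipschitz of Gamma}, $w_T$ of each image is controlled by $w_T$ of the data, uniformly in $N$; uniformity in $x$ (so that $C$-tightness holds in $\MVDK$ rather than merely pointwise in $x$) again follows from \eqref{eq:d dominations} and atomlessness, as in the proof of Theorem~\ref{thm: soft convergence}. Inducting up to time $T$ gives $C$-tightness of the whole tuple.
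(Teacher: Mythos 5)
Your proof is correct and follows essentially the same route as the paper's: the key step in both is to bound the reneging increment $\bar\rho^{i,N}(s+\delta)-\bar\rho^{i,N}(s)$ by the mass arriving with deadlines in $(s,s+\delta]$, then recurse on the endogenous part by shifting the deadline window back by $\eps$ at each step (terminating after $\lceil T/\eps\rceil$ iterations), and conclude via atomlessness of $\al_T$; the tuple's $C$-tightness then follows by the same $\eps$-square induction through the Lipschitz Skorokhod maps. The only cosmetic differences are that the paper dominates $\|\bar\rho^N\|_T$ directly by the arrival counts rather than going through the renewal FLLN, and bounds $\bar\gamma^{ji,N}\le\bar\gamma^{j,N}$ crudely instead of invoking the routing split and Lemma~\ref{lem:martingale} (so neither the error processes nor the convergence of $P$ is needed at this stage --- the recursion terminates in finitely many steps regardless).
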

\begin{proof}
The second statement follows from the first
by a simple inductive argument over the squares $\lt[0,n\eps\rt)^2$
based on the continuous mapping theorem and Remark 5.1 of \cite{atar2018}.
We omit the details.

To prove the first claim, fix $T$. 
$C$-tightness is shown for $\{\bar\rho^N|_{[0,T]}\}$.
Define $F_{\bar{\alpha}^{i,N}}\brac{x}=\bar{\alpha}_T^{i,N}\closedbrac{0,x}$. To show $C$-tightness, note first that $\|\bar\rho^N\|_T$ is dominated by $\|\bar\al^N\|_T$,
that is a tight sequence of RVs. Hence it remains to show that
for every $\del>0$, $w_T(\bar\rho^N,\del)\to0$ in probability, as $N\to\infty$.
To this end,
we bound the aforementioned modulus of continuity
in terms of $w_\iy\brac{F_{\bar{\alpha}_T^{i,N}},\del}$ using the following chain of inequalities.
The justification of each step in this chain is given below. For $0\le t\le t+\del\le T$,
\begin{align}
    \bar{\rho}^N\brac{t+\delta}-\bar{\rho}^N\brac{t}&\leq\bar{\alpha}^N_{t+\delta}\closedbrac{0,t+\delta}-\bar{\alpha}^N_{t}\closedbrac{0,t}+\sum_{j=1}^K\bar{\gamma}_{t+\delta}^{ji,N}\closedbrac{0,t+\delta}-\sum_{j=1}^K\bar{\gamma}_{t}^{ji,N}\closedbrac{0,t}\label{eq:rhotight1}\\
    &\leq\bar{\alpha}_T^{i,N}\closedbrac{t,t+\delta}+\sum_{j=1}^K\bar{\gamma}_{t+\delta}^{j,N}\left[t,t+\delta\right]\label{eq:rhotight2}\\
    &\leq\bar{\alpha}_T^{i,N}\closedbrac{t,t+\delta}+\sum_{j=1}^K\bar{\beta}_{t+\delta}^{js,N}\left[t-\eps,t+\delta-\eps\right]+N^{-1}\label{eq:rhotight3}\\
    &\leq\bar{\alpha}_T^{i,N}\closedbrac{t,t+\delta}+\sum_{n=1}^{\left\lfloor\frac{t+\delta}{\eps}\right\rfloor}\sum_{j=1}^K\brac{\bar{\alpha}_T^{j,N}\closedbrac{t-n\eps,t+\delta-n\eps}+\frac{1}{N}}+N^{-1}\label{eq:rhotight4}\\
    &\leq\brac{\frac{KT}{\eps}+1}\brac{\max_{i}w\brac{F_{\bar{\alpha}_T^{i,N}},\delta}+\frac{1}{N}}.\nonumber
\end{align}
The convergence $\bar\al^N\To\al$ and the continuity of the path $t\mapsto\al_t$ we have
$\bar\al^N_T\To\al_T$. Since by Assumption \ref{ass:data assumptions}, $\al_T$ has no atoms,
the continuous, monotone, bounded function $x\mapsto F_{\al_T}(x)$ is uniformly continuous,
and we have
$\lim_{\eta\downarrow0}\limsup_NP(w_\iy(F_{\bar\al_T^{i,N}},\del)>\eta)=0$.
This shows
that $\{\bar{\rho}^N\}$ are $C$-tight.

It remains to prove the chain of inequalities.

Inequality \eqref{eq:rhotight1} follows from \eqref{eq:hard Jackson dynamics} with $x=t$, $\bar{\xi}_t^{i,N}\closedbrac{0,t}=0$, and $\bar{\beta}_t^{is,N}\closedbrac{0,t}\leq\bar{\beta}_{t+\delta}^{is,N}\closedbrac{0,t+\delta}$.

For inequality \eqref{eq:rhotight2}, first, $\bar{\alpha}_{t+\delta}^{i,N}[0,t]=\bar{\alpha}_t^{i,N}\closedbrac{0,t}$ because, by assumption, no job enters the system with an overdue deadline
(see Figure \ref{fig:Illustration for tight rho}). Hence
\begin{align*}
    \bar{\alpha}^N_{t+\delta}\closedbrac{0,t+\delta}-\bar{\alpha}^N_{t}\closedbrac{0,t}&=\bar{\alpha}^N_{t+\delta}\closedbrac{0,t+\delta}-\bar{\alpha}^N_{t+\delta}\closedbrac{0,t}+\bar{\alpha}_{t+\delta}^{i,N}[0,t]-\bar{\alpha}_t^{i,N}\closedbrac{0,t}\\
    &\leq\bar{\alpha}^N_{t+\delta}\closedbrac{t,t+\delta}\\
    &\leq\bar{\alpha}^N_{T}\closedbrac{t,t+\delta}.
\end{align*}
Then, use a similar property for $\bar{\gamma}^{is,N}$:  $\bar{\gamma}_{t+\delta}^{is,N}\closedbrac{0,t}=\bar{\gamma}_{t}^{is,N}\closedbrac{0,t}$. 
This is due to the fact that no job is served after its deadline has expired.
To see this, fix any $x\leq t\leq t_0$, and recall that by \eqref{104}
\begin{align}\label{eq:hard prelimit gamma ij diff}
    \bar{\gamma}_{t_0}^{ji,N}\closedbrac{0,x}-\bar{\gamma}_{t}^{ji,N}\closedbrac{0,x}=\int_{\lt(t,t_0\rt]}\theta^{ji,N}\brac{s}d\bar{\gamma}_s^{i,N}\lt[0,x\rt]=0.
\end{align}
It follows that $\bar{\gamma}_{t+\delta}^{ji,N}\closedbrac{0,t+\delta}-\bar{\gamma}_{t}^{ji,N}\closedbrac{0,t+\delta}\leq\bar{\gamma}_{t+\delta}^{j,N}\closedbrac{t,t+\delta}$.

\begin{figure}
    \centering
    \begin{tikzpicture}
        \draw[<->] (0,2.5) node[anchor=east]{$t$} --(0,0)--(2.5,0) node[anchor=north]{$x$};
        \draw (0,0)--(2,2);
        \draw (1,0) rectangle (1.5,1.5);
        \draw (0,1) rectangle (1,1.5);
        \node[anchor=north east] at (1,0) {$t$};
        \node[anchor=north] at (1.5,0) {$t+\delta$};
        \node at (0.5,1.25) {0};
    \end{tikzpicture}
    \caption{Illustration for Inequality \eqref{eq:rhotight2}.}
    \label{fig:Illustration for tight rho}
\end{figure}
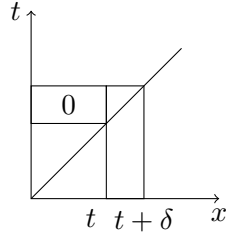

Inequality \eqref{eq:rhotight3} follows from Equation \eqref{eq:hard prelimit gamma beta}.

For \eqref{eq:rhotight4}, we prove by induction that
\begin{align*}
    \sum_{j=1}^K\bar{\beta}_{T}^{js,N}\left[t,t+\delta\right]\leq\sum_{n=0}^{\left\lfloor\frac{t+\delta}{\eps}\right\rfloor}\sum_{j=1}^K\brac{\bar{\alpha}_T^{j,N}\closedbrac{t-n\eps,t+\delta-n\eps}+\frac{1}{N}}.
\end{align*} 

If $t$ is such that $t+\delta<\eps$, then the statement follows from Equation \eqref{eq:hard Jackson dynamics} and $\bar{\beta}_T^{jr,N}\closedbrac{t,t+\delta}\geq 0$.

Now, assume that the desired inequality holds for $t$ such that
$\left\lfloor\frac{t+\delta}{\eps}\right\rfloor=m-1$.
Then for $t$ such that $\left\lfloor\frac{t+\delta}{\eps}\right\rfloor=m$, we have
\begin{align*}
    \sum_{j=1}^K\bar{\beta}_{T}^{js,N}\left[t-\eps,t+\delta-\eps\right]\leq\sum_{n=1}^{m}\sum_{j=1}^K\brac{\bar{\alpha}_T^{j,N}\closedbrac{t-n\eps,t+\delta-n\eps}+\frac{1}{N}}.
\end{align*}
For these values of $t$, from \eqref{eq:hard Jackson dynamics},
\begin{align*}
    \bar{\beta}_T^{is,N}\closedbrac{t,t+\delta}\leq\bar{\alpha}_T^{i,N}\closedbrac{t,t+\delta}+\sum_{j=1}^K\bar{\gamma}_T^{ji,N}\closedbrac{t,t+\delta}.
\end{align*}
Summing over all $j$, using \eqref{eq:hard prelimit gamma beta} and the assumption,
\begin{align*}
        \sum_{i=1}^K\bar{\beta}_T^{is,N}\closedbrac{t,t+\delta}&=\sum_{i=1}^K\bar{\alpha}_T^{i,N}\closedbrac{t,t+\delta}+\sum_{j=1}^K\bar{\gamma}_T^{j,N}\closedbrac{t,t+\delta}\\
        &\leq\sum_{i=1}^K\bar{\alpha}_T^{i,N}\closedbrac{t,t+\delta}+\sum_{j=1}^K\bar{\beta}_T^{js,N}\closedbrac{t-\eps,t+\delta-\eps}+\sum_{j=1}^K\frac{1}{N}\\
        &\leq\sum_{i=1}^K\bar{\alpha}_T^{i,N}\closedbrac{t,t+\delta}+\sum_{n=1}^{m}\sum_{j=1}^K\brac{\bar{\alpha}_T^{j,N}\closedbrac{t-n\eps,t+\delta-n\eps}+\frac{1}{N}}+\sum_{j=1}^K\frac{1}{N}\\
        &=\sum_{n=0}^{m}\sum_{j=1}^K\brac{\bar{\alpha}_T^{j,N}\closedbrac{t-n\eps,t+\delta-n\eps}+\frac{1}{N}}.
\end{align*}
This completes the proof of the chain of inequalities and the result follows.
\end{proof}

This means that the tuple is also sequentially compact. We will use that fact soon to prove Theorem \ref{thm:hard convergence}.

\begin{proof}[Proof of Theorem \ref{thm:hard convergence}]
First, the convergence $\brac{\bar{e}^N,\bar{E}^N}\Rightarrow\brac{0,0}$ follows by  the same reasons as for the soft version.
Next, from Lemma \ref{lem:rho is tight} and Prohorov's theorem,
every subsequence of the tuple has a convergent subsequence.
We will show that any subsequential limit must be a solution
to the fluid model; uniqueness implies then convergence of the entire sequence. 

Consider a convergent subsequence
and denote by $(\rho,\al,\mu,\xi,\beta^s,\beta^r,\beta,\gamma,\iota)$
its limit.
We appeal to Skorokhod representation theorem (\cite[Thm. 6.7]{Billingsley1999}), and assume without loss of generality convergence a.s.
It is now argued that the tuple $(\rho,\al,\mu,\xi,\beta^s,\beta^r,\beta,\gamma,\iota)$ satisfies the fluid model equations.


Equation \eqref{108} follows by $\bar{e}^{i,N}\To 0$. 
Identities \eqref{100}, \eqref{107}, \eqref{eq:Hard fluid dynamics}, and \eqref{eq:rho def}
will follow from the convergence of the tuple once it is shown that
for every $t$, the measures $\xi_t$, $\beta_t$, etc., have no atoms.
To show that these measures have no atoms, consider
relation \eqref{eq:hard Jackson dynamics} with $x<\eps$, in which case
the term involving $\gamma^N$ is absent.
Using the assumption that $\alpha\in\MVCupnoatomsK$
 and Portmanteau theorem, for every $0\leq a<b<\eps$, a.s.,
\begin{align*}
    \beta_t^{is}\brac{a,b}\leq\liminf\bar{\beta}_t^{is,N}\brac{a,b}\leq\liminf
    \bar{\alpha}_t^{i,N}\brac{a,b}=\al^i_t(a,b).
\end{align*}
Hence the fact that $\al^i_t$ has no atoms implies that the same is true for $\beta^{is}_t$,
on the interval $[0,\eps)$.
Hence the same holds for $\gamma^i_t$ and a similar argument holds for $\xi^i_t$.
An inductive argument over intervals $[0,n\eps)$ used in \eqref{eq:hard Jackson dynamics}
shows that these measures are all atomless on all of $\R_+$. (The induction argument
is omitted).


Moving now to show \eqref{eq:hard fluid work cons} and \eqref{eq:hard fluid EDF cond}, note that by  \eqref{eq:hard prelimit gamma beta}, \eqref{eq:hard Jackson dynamics},  \eqref{eq:hard prelimit work cons}, \eqref{eq:hard prelimit EDF cond}, and \eqref{eq:errors hard},
\begin{align*}
    &\brac{\bar{\xi}^{i,N}\closedbrac{0,x},\bar{\beta}^{is,N}\brac{x,\infty}+\bar{\iota}^{i,N}}=\\
    &\Gamma^{(1)}\Bigg(\bar{\alpha}^{i,N}\closedbrac{0,x}+\sum_{j=1}^KP_{ji}\brac{\bar{B}^{i,N}\closedbrac{0,x-\eps}-\bar{B}^{i,N}_{0-}\closedbrac{0,x-\eps}}\\
    &\quad+\sum_{j=1}^KP_{ji}\bar{\beta}^{js,N}\closedbrac{0,x-\eps}-\bar{\rho}^{i,N}\brac{\cdot\land x}-\bar{\mu}^{i,N}-\bar{e}^{i,N}+\sum_{j=1}^K\bar{E}^{ji,N}\brac{\cdot,x}\Bigg).
\end{align*}
Recalling that $\Gamma^{(1)}$ is continuous, using \eqref{eq:d dominations} and the fact that there are no atoms, one obtains \eqref{eq:hard fluid work cons} and \eqref{eq:hard fluid EDF cond} by the definition of the Skorokhod map.

It remains to show that the limit satisfies the condition $\int\mathbbm{1}_{\{\sigma^i\brac{t}>t\}}d\rho\brac{t}=0$. 
The idea is similar to the proof in Section 5.1.4 in \cite{atar2018};
however, there are many details that are different.
By Fatou's lemma, it is enough to prove  that the event
\begin{align*}
    E_0^i=\left\{\int_0^T\indicator{\sigma^i\brac{t}>t+\delta}d\rho^i\brac{t}>0\right\}
\end{align*}
occurs with probability zero for all $1\leq i\leq K$. We refer to Lemma 5.9 in \cite{atar2018} and note that there exists a $\lt[0,T\rt)\cup\{\infty\}$-valued random variable $\tau$ such that $\prob{E_0^i}=\prob{E_1^i\cap E_2^i}$ where
\begin{align*}
    &E_1^i=\left\{\tau<T,\sigma^i\brac{\tau}>\tau+\delta\right\},
    &E_2^i=\left\{\rho^i\brac{\tau+\delta}>\rho^i\brac{\tau},\forall\delta>0\right\}.
    \end{align*}
Define
    \begin{align*}
    &E_3^i=\left\{\exists\delta\brac{\omega}>0:\rho^i\brac{\tau+\delta}=\rho^i\brac{\tau}\right\}
\end{align*}
and note that $\prob{E_1^i\cap E_2^i}=\prob{E_1^i\cap E_3^{ic}}$.

We wish to show that for any $\delta>0$, $\prob{E_0^i}=0$, which is equivalent to showing $\prob{E_1^i\cap E_3^{ic}}=0$ for all $i$. Note that it is enough to show this for any $\eps>\delta>0$.
In fact, we shall take $\delta<\eps\land\delta_0$, where $\delta_0\in\brac{0,1}$ satisfies 
\begin{align}\label{eq:delta 0}
    a_s^i\closedbrac{s,s+2\delta_0}<m^i\brac{s}\text{ for all }s\in\closedbrac{0,T+1}\text{ and }i\in\{1,...,K\}.
\end{align}
The existence of such $\delta_0$ is guaranteed by Assumption \ref{ass:data assumptions}.

By \eqref{eq:hard Jackson dynamics} and \eqref{eq:hard prelimit hardness}, for $b>a$,
\begin{align*}
    \bar{\rho}^{i,N}\brac{b}-\bar{\rho}^{i,N}\brac{a}+\bar{\beta}_b^{is,N}\lt(a,b\rt]-\bar{\beta}_a^{is,N}\lt(a,b\rt]=\bar{\alpha}_b^{i,N}\lt(a,b\rt]-\bar{\alpha}_a^{i,N}\lt(a,b\rt]+\bar{\xi}_a^{i,N}\lt(a,b\rt]+\sum_{j=1}^K\brac{\bar{\gamma}_b^{ji,N}\lt(a,b\rt]-\bar{\gamma}_a^{ji,N}\lt(a,b\rt]}.
\end{align*}

Partition $\left(\tau,\tau+\delta\right]$ into $M\in\N$ subintervals  $I_m=\left(t_{m-1},t_m\right]$, with $\delta_M=M^{-1}\delta$ and $t_m=\tau+m\delta_M$, $m=1,...,M$, and bound the increment of $\bar{\rho}^{i,N}$ by
\begin{align*}
    \bar{\rho}^{i,N}\brac{\tau+\delta}-\bar{\rho}^{i,N}\brac{\tau}=\sum_{m=1}^M\brac{\bar{\rho}^{i,N}\brac{t_m}-\bar{\rho}^{i,N}\brac{t_{m-1}}}\leq C_{N,M}^i+D_{N,M}^i+G_{N,M}^i,
\end{align*}
where 
\begin{align*}
    &C_{N,M}^i=\sum_{m=1}^M\bar{\xi}_{t_{m-1}}^{i,N}\brac{I_m},\\
    &D_{N,M}^i=\sum_{m=1}^M\brac{\bar{\alpha}_{t_m}^{i,N}\brac{I_m}-\bar{\alpha}_{t_{m-1}}^{i,N}\brac{I_m}},\\
    &G_{N,M}^i=\sum_{m=1}^M\sum_{j=1}^K\brac{\bar{\gamma}_{t_m}^{ji,N}\brac{I_m}-\bar{\gamma}_{t_{m-1}}^{ji,N}\brac{I_{m}}}.
\end{align*}
We first fix $M$ and let $N\to\infty$, and then let $M\to \infty$ to obtain $\mathbbm{1}_{E_1^i}\brac{C_{N,M}^i+D_{N,M}^i+G_{N,M}^i}\to 0$ a.s., which implies $\mathbbm{1}_{E_1^i}\brac{\rho^i\brac{\tau+\delta}-\rho^i\brac{\tau}}=0$ a.s.
This completes the proof by concluding $\prob{E_1^i\cap E_3^{ic}}=0$ for all $i$.

By assumption, $D_{N,M}$ converges as $N\to\infty$ to
\begin{align*}
    D_M=\sum_{m=1}^M\int_{t_{m-1}}^{t_m}a_s^i\brac{I_m}ds\leq\brac{T+\delta}\sup_{s\in\closedbrac{0,T}}a_s^i\closedbrac{s,s+\delta_M},
\end{align*}
and, as $M\to\infty$, $\sup_{s\in\closedbrac{0,T}}a_s^i\closedbrac{s,s+\delta_M}\to 0$ as assumed in Assumption \ref{ass:data assumptions}.

Next, note that $C_{N,M}^i\leq M\max_{s\in\closedbrac{\tau,\tau+\delta}}\bar{\xi}_s^{i,N}\lt(\tau,\tau+\delta\rt]$, and recall that $\bar{\xi}^{i,N}\to\xi^i\in\MVCnoatoms$ a.s. 
Hence
\begin{align*}
    \sup_{s\in\closedbrac{0,T}}\sup_x\abs{\bar{\xi}_s^{i,N}\closedbrac{0,x}-\xi_s^i\closedbrac{0,x}}\to 0\text{ a.s.}
\end{align*}
This, together with $\mathbbm{1}_{E_1^i}\xi_\tau\closedbrac{\tau,\tau+\delta}=0$ and $\xi_\tau\lt[0,\tau\rt)=0$, implies $\mathbbm{1}_{E_1^i}\bar{\xi}_\tau^{i,N}\closedbrac{\tau,\tau+\delta}\to 0$ a.s.
By virtue of the shift property of the Skorokhod mapping, $\xi_{\tau+\cdot}^i\closedbrac{0,\tau+\delta}=\Gamma_1^{(1)}\brac{\psi^{i,\tau,\delta}}$, where:
\begin{align*}
    \psi^{i,\tau,\delta}\brac{t}=&\xi_\tau^i\closedbrac{0,\tau+\delta}+\alpha_{\tau+t}^i\closedbrac{0,\tau+\delta}-\alpha_\tau^i\closedbrac{0,\tau+\delta}\\
    &+\sum_{j=1}^KP_{ji}\brac{\beta_{\tau+t}^{js}\closedbrac{0,\tau+\delta-\eps}-\beta_{\tau}^{js}\closedbrac{0,\tau+\delta-\eps}}\\
    &-\beta_{\tau+t}^{ir}\closedbrac{0,\tau+\delta}-\beta_\tau^{ir}\closedbrac{0,\tau+\delta}-\mu^i\brac{\tau+t}+\mu^i\brac{\tau}.
\end{align*}
Notice that if $\delta$ is smaller than $\eps$ then the sum over $j$ is just 0, as in \eqref{eq:hard fluid beta zero} . 
Moreover,
\begin{align*}
    \alpha_{\tau+t}^i\closedbrac{0,\tau+\delta}-\alpha_\tau^i\closedbrac{0,\tau+\delta}-\mu^i\brac{\tau+t}+\mu^i\brac{\tau}=\int_\tau^{\tau+t}a_s^i\closedbrac{0,\tau+\delta}ds-\int_\tau^{\tau+t}m^i\brac{s}ds
\end{align*}
is non-increasing for $t\in\closedbrac{0,\delta_0}$ by \eqref{eq:delta 0}. 
Therefore, $\xi_t^{i,N}\closedbrac{0,\tau+\delta}=0$ for all $t\in\closedbrac{\tau,\tau+\delta}$, resulting with $\mathbbm{1}_{E_1^i}C_{N,M}\to 0$ a.s.

As for $G_{N,M}^i$, we start with the bound
\begin{align*}
    G_{N,M}^i\leq \sum_{m=1}^M\sum_{j=1}^K\brac{\bar{\gamma}_{t_{m}}^{j,N}\brac{I_m}-\bar{\gamma}_{t_{m-1}}^{j,N}\brac{I_m}}\leq\sum_{m=1}^M\sum_{j=1}^K\brac{\bar{\beta}_{t_{m}}^{js,N}\brac{I_m}-\bar{\beta}_{t_{m-1}}^{js,N}\brac{I_m}}+\frac{MK}{N}.
\end{align*}
To bound it further, we prove the following. Fix $t_2>t_1$, then for any interval $A=\left[t_3,t_4\right)\subset\closedbrac{0,T}$ such that $t_2\geq t_4$:
\begin{align*}
    \sum_{i=1}^K\brac{\bar{\beta}_{t_2}^{is,N}\brac{A}-\bar{\beta}_{t_1}^{is,N}\brac{A}}\leq\sum_{n=0}^{\left\lfloor t_4/\epsilon\right\rfloor}\brac{\sum_{i=1}^K\brac{\bar{\alpha}_{t_2}^{i,N}\brac{A-n\epsilon}-\bar{\alpha}_{t_1}^{i,N}\brac{A-n\epsilon}}+\sum_{i=1}^K\bar{\xi}_{t_1}^{i,N}\brac{A-n\epsilon}+\frac{K}{N}}
\end{align*}
This will be shown by induction over $t_4<n\epsilon$. From \eqref{eq:hard Jackson dynamics} and the monotonicity of $t\mapsto\bar{\beta}_t^{ir,N}\brac{A}$, and $\bar{\xi}_{t_2}^{i,N}\left[0,t_4\right)=0$:
\begin{align*}
    \bar{\beta}_{t_2}^{is,N}\brac{A}-\bar{\beta}_{t_1}^{is,N}\brac{A}&\leq \bar{\alpha}_{t_2}^{i,N}\brac{A}-\bar{\alpha}_{t_1}^{i,N}\brac{A}+\sum_{j=1}^K\brac{\bar{\gamma}_{t_2}^{ji,N}\brac{A}-\bar{\gamma}_{t_1}^{ji,N}\brac{A}}+\bar{\xi}_{t_1}^{i,N}\brac{A},
\end{align*}
and by summing over all servers
\begin{align*}
    \sum_{i=1}^K\brac{\bar{\beta}_{t_2}^{is,N}\brac{A}-\bar{\beta}_{t_1}^{is,N}\brac{A}}&\leq \sum_{i=1}^K\brac{\bar{\alpha}_{t_2}^{i,N}\brac{A}-\bar{\alpha}_{t_1}^{i,N}\brac{A}+\bar{\xi}_{t_1}^{i,N}\brac{A}}\\
    &\quad+\sum_{j=1}^K\brac{\bar{\beta}_{t_2}^{js,N}\brac{A-\epsilon}-\bar{\beta}_{t_1}^{js,N}\brac{A-\epsilon}}+\frac{K}{N}.
\end{align*}
If $t_4<\epsilon$ then
\begin{align*}
    \bar{\beta}_{t_2}^{is,N}\brac{A}-\bar{\beta}_{t_1}^{is,N}\brac{A}\leq \bar{\alpha}_{t_2}^{i,N}\brac{A}-\bar{\alpha}_{t_1}^{i,N}\brac{A}+\bar{\xi}_{t_1}^{i,N}\brac{A},
\end{align*}
and the statement is true by summing over all servers. Assume that the statement is true for $t_4\leq n\epsilon$. If we take some $t_4\leq\brac{n+1}\epsilon$ and use our induction assumption we get:
\begin{align*}
    \sum_{i=1}^K\brac{\bar{\beta}_{t_2}^{is,N}\brac{A}-\bar{\beta}_{t_1}^{is,N}\brac{A}}&\leq \sum_{i=1}^K\brac{\bar{\alpha}_{t_2}^{i,N}\brac{A}-\bar{\alpha}_{t_1}^{i,N}\brac{A}+\bar{\xi}_{t_1}^{i,N}\brac{A}}\\
    &\quad+\sum_{n=1}^{\left\lfloor t_4/\epsilon\right\rfloor}\sum_{i=1}^K\brac{\bar{\alpha}_{t_2}^{i,N}\brac{A-n\epsilon}-\bar{\alpha}_{t_1}^{i,N}\brac{A-n\epsilon}+\bar{\xi}_{t_1}^{i,N}\brac{A-n\epsilon}+\frac{1}{N}}+\frac{K}{N}\\
    &=\sum_{n=0}^{\left\lfloor t_4/\epsilon\right\rfloor}\sum_{i=1}^K\brac{\bar{\alpha}_{t_2}^{i,N}\brac{A-n\epsilon}-\bar{\alpha}_{t_1}^{i,N}\brac{A-n\epsilon}+\bar{\xi}_{t_1}^{i,N}\brac{A-n\epsilon}+\frac{1}{N}}
\end{align*}
We need this result for $t_1=t_3=t_{m-1}$ and $t_2=t_4=t_m$; if $M$ is big enough then $\bar{\xi}_{t_{m-1}}^{i,N}\brac{A-n\epsilon}=0$ for $n\geq 1$, and then:
\begin{align*}
    \sum_{i=1}^K\brac{\bar{\beta}_{t_2}^{is,N}\brac{A}-\bar{\beta}_{t_1}^{is,N}\brac{A}}\leq \sum_{n=0}^{\left\lfloor t_4/\epsilon\right\rfloor}\sum_{i=1}^K\brac{\bar{\alpha}_{t_2}^{i,N}\brac{A-n\epsilon}-\bar{\alpha}_{t_1}^{i,N}\brac{A-n\epsilon}+\frac{1}{N}}+\sum_{i=1}^K\bar{\xi}_{t_1}^{i,N}\brac{A}.
\end{align*}
It follows that 
\begin{align*}
    G_{N,M}^i\leq\sum_{m=1}^M\frac{KT}{\epsilon}\max_{i,n}\brac{\bar{\alpha}_{t_m}^{i,N}\brac{I_m-n\epsilon}-\bar{\alpha}_{t_{m-1}}^{i,N}\brac{I_m-n\epsilon}}+K\sum_{m=1}^M\bar{\xi}_{t_{m-1}}^{i,N}\brac{I_m}+\brac{\frac{T}{\epsilon}+1}\frac{MK}{N}.
\end{align*}
The terms on the RHS vanish as $C_{N,M}^i$ and $D_{N,M}^i$ above.
\end{proof}

\paragraph{Acknowledgement.}
This research was supported in part by the ISF (grants 1184/16 and 1035/20). 


\footnotesize

\bibliographystyle{is-abbrv}

\bibliography{EDF}

\end{document}